\documentclass[12pt]{article}
\usepackage[a4paper]{geometry}
\geometry{
  top=0in,            
  inner=1in,
  outer=1in,
  bottom=3in,
  headheight=-1in,       
  headsep=-2in,          
}
\textheight = 692pt
\usepackage{amsthm}

\usepackage{amssymb}
\usepackage{amsmath}
\usepackage{eufrak}

\newtheorem{theorem}{Theorem}

\newtheorem{corollary}[theorem]{Corollary}
\newtheorem{lemma}[theorem]{Lemma}

\newtheorem{proposition}[theorem]{Proposition}

\begin{document}
\def\F{{\mathbb F}}
\title{ From pre-Lie rings back  to braces}
\author{  Agata Smoktunowicz}
\date{ }
\maketitle

\begin{abstract} 

 Let $A$ be a brace of cardinality $p^{n}$ where $p>n+1$ is prime, and  $ann (p^{i})$ be the set of elements of additive order at most $p^{i}$ in this brace.   A pre-Lie ring  related to the brace $A/ann(p^{2})$ was constructed  in \cite{shsm}.

 We show   that there is a formula dependent only  on the additive group of the brace $A$ which reverses the construction from \cite{shsm}. As an  application example it is shown that 
   the brace $A/ann(p^{4})$ is the group of flows of a left nilpotent pre-Lie algebra. 

  %
  \end{abstract}

\section{Introduction}
  Let $A$ be a brace of cardinality $p^{n}$ where $p>n+1$ is prime, and  $ann (p^{i})$ be the set of elements of additive order at most $p^{4}$ in this brace. 
 A pre-Lie ring  related to the brace $A/ann(p^{2})$ was constructed  in \cite{shsm}.
   The aim of this paper is to show that this construction can be reversed to recover the brace $A/ann(p^{2})$ by applying a formula similar to the formula for the group of flows to the obtained pre-Lie ring. This formula is the same for braces which have the same additive group.  
   The main result  of this paper is the following application of this result:   
  \begin{theorem}\label{Main}  Let $p$ be a prime number, and $n<p-1$ be a natural number. Let $(A, +, \circ )$ be a brace of cardinality $p^{n}$, and let $ann (p^{4})$ be the set of elements 
  of additive order at most $p^{4}$ in this brace.
   Then the multiplication in the brace $A/ann(p^{4})$ is the same as the multiplication in the  group of flows of some left nilpotent pre-Lie algebra $(A/ann(p^{4}), +, \cdot )$.
   Moreover, the addition in the pre-Lie algebra $(A/ann(p^{4}), +, \cdot )$ and in the  brace $(A/ann(p^{4}), +, \circ )$ is the same (where $(A/ann(p^{4}), +, \circ )$ is the factor brace of the brace $(A, +, \circ )$ by the ideal $ann(p^{4})$).
  \end{theorem}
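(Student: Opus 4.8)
The plan is to deduce Theorem~\ref{Main} as an application of the reverse formula announced in the abstract: I would feed that formula the pre-Lie ring which \cite{shsm} attaches to a quotient of $A$, and then track how much additive torsion each of the two constructions controls so as to pin down the precise quotient on which they are mutually inverse.

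First I would isolate the two roles played by the hypotheses. The inequality $n < p-1$ guarantees that every integer occurring as a denominator in the group-of-flows formula, being a product of integers at most $n$, is coprime to $p$ and hence invertible on the $p$-group $(A,+)$; this is what lets the formula divide and still return an element of $A/ann(p^4)$. Separately, finiteness of $A$ forces the pre-Lie ring built in \cite{shsm} to be left nilpotent, so the iterated left multiplications $L_a\colon x \mapsto a\cdot x$ eventually vanish and the group-of-flows series collapses to a finite sum, defining an honest operation. I would stress that, by the result quoted in the abstract, this reverse formula depends only on $(A,+)$, which is exactly what makes a uniform application possible.

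Next I would assemble the maps. From \cite{shsm} I obtain a left nilpotent pre-Lie product $\cdot$ on the quotient where that construction is valid; lifting and reducing modulo $ann(p^4)$ yields a pre-Lie algebra $(A/ann(p^4), +, \cdot)$, whose left nilpotency is inherited from the construction. Let $\circ'$ be the operation produced from $\cdot$ by the reverse formula. Because the group of flows of any left nilpotent pre-Lie algebra $(L,+,\cdot)$ is a brace whose underlying additive group is $(L,+)$, the last assertion of the theorem, namely that the additions of the pre-Lie algebra and of the brace agree, is immediate; the whole content of the proof is the identity $\circ' = \circ$ on $A/ann(p^4)$.

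The crux, and the step I expect to be the main obstacle, is the torsion bookkeeping that promotes $p^2$ to $p^4$. The construction of \cite{shsm} identifies the pre-Lie ring with the brace only after passing to $A/ann(p^2)$, and reversing it via the group-of-flows formula again recovers the operation only up to additive torsion of order $p^2$ in that quotient; since an element of additive order at most $p^2$ in $A/ann(p^2)$ is represented by an element of order at most $p^4$ in $A$, the composite equality descends exactly to $A/ann(p^4)$. To make this rigorous I would expand $\circ'$ and $\circ$ as sums of iterated pre-Lie products, match them degree by degree using left nilpotency to truncate, and use the invertibility of the coefficients (valid since $n < p-1$) to show that the discrepancy at each degree is annihilated by $p^4$. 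Agreement of the two operations on $A/ann(p^4)$ then identifies the brace $A/ann(p^4)$ with the group of flows of $(A/ann(p^4), +, \cdot)$, completing the proof.
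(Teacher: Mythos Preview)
Your proposal has a genuine gap: it conflates two formulas that the paper keeps deliberately separate. The ``reverse formula'' of Theorem~\ref{789} recovers the brace operation $*$ from the pre-Lie operation $\bullet$ by a formula depending only on $(A,+)$, but the paper nowhere shows that this formula coincides with the group-of-flows formula; indeed it remarks explicitly in the introduction that this is \emph{not clear}. So when you write ``Let $\circ'$ be the operation produced from $\cdot$ by the reverse formula'' and then appeal to properties of the group of flows, you are assuming precisely what needs to be proved. Your proposed direct attack --- expand both operations as iterated pre-Lie products and match degree by degree --- is not a proof: there is no reason the coefficients produced by Theorem~\ref{789} should agree term-by-term with the Baker--Campbell--Hausdorff-type coefficients of the group of flows, and you give no mechanism for comparing them.

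The paper's argument is indirect and hinges on two ingredients you are missing. First, Lemma~\ref{similar}: if one starts from a pre-Lie ring $(A,+,\cdot)$, forms its group of flows $(A,+,\circ)$, and then applies the \cite{shsm} construction, one recovers $(p-1)\cdot$ on $A/ann(p^2)$. Second, a rescaling: because of that factor $(p-1)$, one must first multiply the pre-Lie product $\bullet$ obtained from $A$ by $-(1+p+\cdots+p^{p-1})$ (the inverse of $p-1$) to get a pre-Lie ring $\bullet_3$, and it is the group of flows of $\bullet_3$ that recovers the brace. The logic is then: the original brace $A$ and the group-of-flows brace built from $\bullet_3$ both yield the \emph{same} pre-Lie ring under the \cite{shsm} construction (after passing to the further quotient by $I$); Theorem~\ref{789} says this construction is injective on braces with a fixed additive group; hence the two braces agree on $(A/ann(p^2))/I \cong A/ann(p^4)$. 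Your torsion bookkeeping for why $p^4$ appears is correct, but it only explains the target quotient, not the identity of the two operations on it.
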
 

    Note that, by Lemma $17$,  \cite{passage}, $ann(p^{i})$ is an ideal in the above brace $A$.
     Therefore the brace $A/ann(p^{i})$ is well defined. 
   Recall that the passage from finite pre-Lie algebras to finite braces was first  discovered by Wolfgang Rump in \cite{Rump}.  He used the  exponential function $e^{a}$ to pre-Lie algebras and showed that the obtained structure is a brace. 
    This construction can also be described using the group of flows developed in \cite{AG}. For more details, see \cite{passage}.
 It is an open question as to whether or not every brace with such cardinality 
 could be be obtained from some  pre-Lie algebra in this way. This is  known to be true for right nilpotent braces for sufficiently large $p$ \cite{passage}, and it is also  known to be true for $\mathbb R$-braces \cite{Rump}, where for $\mathbb R$-braces the correspondence is local.
   
   In \cite{Rump}, page 141,  Wolfgang  Rump  suggested a potential way of approaching this question using $1$ cocycles. However, there are complications, because the additive group of a Lie algebra and the additive group of the brace may not be identical in the case when the adjoint group of the brace is obtained by using the Lazard's correspondence from this Lie algebra.

Notice that Theorem \ref{789} implies that if $A$ is a brace of cardinality $p^{n}$ for a prime number $p$ and a natural number $n<p-1$ then the factor brace  $A/ann(p^{2})$ is obtained by the formula 
  from Theorem \ref{789} from some pre-Lie ring with the same additive group. It is not clear whether the formula from  Theorem \ref{789}
 is the same as the formula for the group of flows. We will use the same notation as in \cite{shsm}. 
 For the background section we refer the reader to the background section of \cite{shsm}.
\section{ Left nilpotency of the  pre-Lie rings constructed in \cite{shsm} }
   Let $(A, +, \circ )$ be a brace. One of the mappings used in connection with braces are the  maps $\lambda _{a}:A\rightarrow A$ for $a\in A$. Recall that  for $a,b, c\in A$ 
   we have $\lambda _{a}(b) = a \circ  b - b$, $\lambda _{a\circ c}(b) = \lambda _{a}(\lambda _{c}(b))$. Our first result shows that the pre-Lie rings constructed in \cite{shsm} are left nilpotent. For the reader's convenience 
    the main result and the construction from \cite{shsm} is quoted in Theorem \ref{1} at the end of this paper.
   
   \begin{lemma}\label{2} Let the notation be as in Theorem \ref{1}.  Then the obtained pre-Lie ring $(A, +, \bullet )$  is left nilpotent.
  \end{lemma} 
  \begin{proof} By Lemma  \ref{citepassage},  we have $pA = A^{\circ p}$. Consequently, if $a\in A$ then 
  \[pa=a_{1}^{\circ p} \circ  a_{2}^{\circ p} \circ \cdots \circ a_{m}^{\circ p}\]
for some $m$ and some $a_{1}, \cdots , a_{m}\in A$. Recall that $\circ $ is an associative operation, so we do not need to put brackets in this formula.
By Lemma \ref{citenote},  we have that 
  \[a^{\circ p} *b =  \sum_{k=1}^{p-1}{p\choose k}e_{k},\] where 
  $e_{1} = a*b$, $e_{2} = a*(e_{1})$,
 and for each $i$,  $e_{i+1} =a*e_{i}$.
  
   Notice that if $b\in A^{i}$  then $e_{j} \in A^{i+j}$, hence $e_{p-1}\in A^{p-1}\subseteq A^{n+1}=0$.
   
By using this formula, we see that if  $e\in A$, $b\in A^{i}$ and $c\in A^{i+1}$ for some $i$ then
   \[ \lambda _{e^{\circ p}}(pc+b)=e^{\circ p}*(pc+b)+pc+b=pc' +b,\] 
for some $c'\in A^{i+1}$ (since ${p\choose k}$ is divisible by $p$ for $0<k<p$).  
  Recall that $A^{i+1}=A*A^{i}$. Let $b\in A^{i}$ for some $i$.  Notice that by the multiplicative property of $\lambda $ maps we have:
\[(pa)*b+b=\lambda _{pa}(b)=\lambda _{a_{1}^{\circ p}\circ \cdots \circ a_{m}^{\circ p}}(b) =
\lambda _{a_{1}^{\circ p}}(\cdots (\lambda _{a_{m-1}^{\circ p}} (\lambda _{a_{m}^{\circ p}}(b)\cdots )))\subseteq pA^{i+1}+b.\] 
     This implies
\[(pa)*b+b\in pA^{i+1} +b,\]
hence $[\wp^{-1}((pa) * b)] \in [A^{i+1}]$, provided that $b\in  A^{i}$, where $[A^{i+1}]=\{[a]:a\in A^{i+1}\}$, and, as usual, we use notation $[a]=[a]_{ann(p^{2})}$.
       Therefore, by the formula for operation $\bullet $ from Theorem \ref{1}, we get 
 \[[a]\bullet [b] \in [A^{i+1}],\] for  $b\in  A^{i}$, $a\in A$. Consequently,
\[[b_{1}] \bullet  ([b_{2}]\bullet ( \cdots  ([b_{n}] \bullet  [b_{n+1}]) ))\in [A^{n+1}]= 0,\] for all $b_{1}, . . . b_{n+1}\in  A$, since  $A^{n+1}=0$ in every brace of cardinality $p^{n}$ for a prime $p$, by a result of Rump \cite{rump}.
  \end{proof} 
  
\section{ Relations between  $\odot $ and  $\bullet $}

 Let $(A, +, \circ )$ be a brace, and let $a\in A$. In this section, by $[a]$, we mean the coset of $A$ in the factor brace $A/ann(p^{2})$, so 
 $[a]=\{a+i: i\in ann (p^{2})\}$. Recall that $[a]$ is an element of the factor brace  $A/ann(p^{2})$. 
  We will denote the multiplication and the addition in the brace  $A/ann(p^{2})$ by  the same symbols as the addition and the multiplication in brace $A$. Recall that $[a]+[b]=[a+b]$ and $[a]*[b]=[a*b]$, $[a\circ b]=[a*b+a+b]$. 
 
  We recall a lemma from \cite{shsm}. 
\begin{lemma}\label{6}
 Let $(A, +, \circ )$ be a brace of cardinality $p^{n}$, $p$ is a prime number, and $p>n+1$. Let $\wp^{-1}:pA\rightarrow A$ be defined as in \cite{shsm}. Let $a,b\in A$,  then $[a]=[a]_{ann(p^{2})}$, $[b]=[b]_{ann(p^{2})}$ are elements of $A/ann(p^{2})$.  
  Define 
  \[[a]\odot [b]=[\wp^{-1}((pa)*b)],\]
   then this is a well defined binary operation on $A/ann(p^{2})$.
\end{lemma}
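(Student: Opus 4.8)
The plan is to verify that $\odot$ does not depend on the chosen coset representatives. First I would record two preliminary facts. Since $\wp^{-1}$ is a section of multiplication by $p$ on $pA$, we have $p\,\wp^{-1}(x)=x$, hence $p^{2}\wp^{-1}(x)=px$ for every $x\in pA$; consequently $\wp^{-1}(x)-\wp^{-1}(y)\in ann(p^{2})$ holds \emph{if and only if} $p(x-y)=0$. Moreover $(pa)*b\in pA$, so $\wp^{-1}((pa)*b)$ is defined: this is precisely the divisibility established in the proof of Lemma \ref{2} through the formula of Lemma \ref{citenote}. Thus, writing $a\equiv a'$ and $b\equiv b'$ modulo $ann(p^{2})$, it suffices to prove $p\bigl((pa)*b-(pa')*b'\bigr)=0$. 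Since $c\mapsto g*c$ is additive in its second argument, $p\,[(pa)*b]=(pa)*(pb)$, so the whole statement reduces to the identity
\[(pa)*(pb)=(pa')*(pb').\]

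The key technical step is the claim that for every $g\in pA$ the map $\lambda_{g}$ restricts to the identity on $ann(p)$, equivalently $g*e=0$ for all $e\in ann(p)$. I would first treat $g=a^{\circ p}$. By Lemma \ref{citenote}, $a^{\circ p}*e=\sum_{k=1}^{p-1}{p\choose k}e_{k}$ with $e_{1}=a*e$ and $e_{k+1}=a*e_{k}$. Because $a*(-)$ is additive and $pe=0$, one gets $p(a*e)=a*(pe)=0$, so $a*e\in ann(p)$, and inductively each $e_{k}\in ann(p)$, that is $pe_{k}=0$. As $p\mid{p\choose k}$ for $1\le k\le p-1$, every summand vanishes and $a^{\circ p}*e=0$. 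For a general $g\in pA$ I use Lemma \ref{citepassage} to write $g=a_{1}^{\circ p}\circ\cdots\circ a_{m}^{\circ p}$ together with the multiplicativity $\lambda_{x\circ y}=\lambda_{x}\lambda_{y}$; since each factor preserves $ann(p)$ and fixes it pointwise, so does the composite $\lambda_{g}$.

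It then remains to change the two arguments one at a time. For the second argument, $(pa)*(pb)-(pa)*(pb')=(pa)*\bigl(p(b-b')\bigr)=0$, because $p(b-b')\in ann(p)$ and the claim applies with $g=pa\in pA$. For the first argument I would convert the additive perturbation into a multiplicative one: set $\delta=pa-pa'=p(a-a')\in ann(p)$ and $w=-\delta$. Using that $\lambda_{pa}$ fixes $ann(p)$, one checks $pa\circ w=pa+\lambda_{pa}(w)=pa-\delta=pa'$, so by multiplicativity $\lambda_{pa'}=\lambda_{pa}\lambda_{w}$ and
\[(pa')*(pb')-(pa)*(pb')=\lambda_{pa}\bigl(\lambda_{w}(pb')-pb'\bigr)=\lambda_{pa}\bigl(w*(pb')\bigr).\]
Now $w*(pb')=p\,(w*b')=0$: indeed $w\in ann(p)$ and $ann(p)$ is an ideal, so $w*b'\in ann(p)*A\subseteq ann(p)$ and hence $p(w*b')=0$. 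Combining the two changes gives $(pa)*(pb)=(pa')*(pb')$, which is what was needed.

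The main obstacle is the first-argument change, since $*$ is not additive in its left entry. The device that resolves it is the passage from the additive difference $\delta$ to the multiplicative element $w$ with $pa'=pa\circ w$; this is legitimate precisely because $\lambda$ is multiplicative and, by the key claim, acts trivially on $ann(p)$ for elements of $pA$. Everything else is routine bookkeeping with additive orders.
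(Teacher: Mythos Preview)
Your argument is correct. Note that the paper does not reproduce a proof here: Lemma~\ref{6} is merely recalled from \cite{shsm}, so there is no in-paper argument to compare against. Your reduction to $(pa)*(pb)=(pa')*(pb')$ via the observation that $[\wp^{-1}(x)]=[\wp^{-1}(y)]\Longleftrightarrow p(x-y)=0$ is the natural one, and the key claim that $g*e=0$ for all $g\in pA$ and $e\in ann(p)$ is established exactly in the spirit of the proof of Lemma~\ref{2}, through $pA=A^{\circ p}$ (Lemma~\ref{citepassage}) and the expansion of $a^{\circ p}*e$ from Lemma~\ref{citenote}. The conversion of the additive perturbation $\delta=p(a-a')$ into a $\circ$-factor $w$ with $pa'=pa\circ w$ is a clean way to handle the non-additivity of $*$ in its left argument.

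Two minor points you may wish to make explicit. First, the sum in Lemma~\ref{citenote} for $a^{\circ p}*e$ runs to $k=p$, not $k=p-1$; the extra term $e_p$ vanishes because $e_p\in A^{p+1}\subseteq A^{n+1}=0$, and it is worth saying so (the paper makes the same truncation in the proof of Lemma~\ref{2} with the analogous justification). Second, the containment $ann(p)*A\subseteq ann(p)$ you invoke does follow from $ann(p)$ being an ideal, but this deserves a word: if $i\in I$ and $a\in A$ then normality of $I$ in $(A,\circ)$ gives $i\circ a=a\circ j$ for some $j\in I$, whence $i*a=a*j+j-i\in I$ since $A*I\subseteq I$.
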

 Notice that $\wp^{-1}$ can be defined in the same way for all braces whose additive group is the same group $(A, +)$. Recall also that if $a=px$ then $\wp^{-1} (a)$ is an element in $A$ such that $p(\wp^{-1}(x))=a$.
  
 
   
   
{\em Remark $1$.}  Let $(A, +, \circ )$ be a brace of cardinality $p^{n}$ for some prime number $p$, and for some natural number $n<p-1$.  
   Note that  in the  proof of the following  Lemma \ref{F} only the following facts are used:
     \begin{itemize}
     \item $p^{p-1}a=0$ for all $a\in A$.
     \item The product of any $p-1$ elements from $pA$, under the operation $*$,  is zero (by Proposition \ref{b}).
     \item For every $i$,  $p^{i}A$ is an ideal in $A$, by \cite{Engel}. 
     \item  The formula from Lemma \ref{co}.
     \item Operation  $\wp ^{-1}:pA\rightarrow A$, which only depends on the additive group, and it can be assumed that for all braces with the same additive group this operation $\wp ^{-1}$ is the same.
     \item The operation of taking the coset $[a]=[a]_{ann(p^{2})}$ of an element $a\in A$. This coset only depends on the additive group of this brace and does not depend on the multiplicative group of this brace.
     \item The inductive assumption, which gives formulas which only depend on the additive group of brace $A$.
     \end{itemize}    
   $ $
   
   {\bf Definition 1.}  Let $V_{j}$ denote the set of all non-associative words in non-commuting variables $X_{1},\ldots , X_{j-1},X_{j}$ (where $X_{j}$ appears only once at the end of each word and all words have length larger than $2$). Let $w$ be a non-associative  word in variables $X_{1}, \ldots X_{j}$, so $w\in V_{j}$ 
   Let $(A, +, \circ )$ be a brace, and let $x_{1}, \ldots , x_{j}\in A$.
    Let $w\langle [x_{1}], \ldots , [x_{j}]\rangle $
    be the specialisation  of $w$, for $X_{i}=[x_{i}]$ and under the operation $\odot $, and
     let $w\{x_{1}, \ldots , x_{j}\}$ be the specialisation  of $w$, for $X_{i}=x_{i}$ and under the operation $*$.
    For example, let $w=(X_{1}X_{2})X_{3}$, and let $a,b\in A$, then $w\langle [a], [a], [b]\rangle =([a]\odot [a])\odot [b]$ and 
$w{a,a,b}=(a*a)*b$.  
   We will use this notation in the following lemma.

\begin{lemma} \label{F}   Let $p$ be a prime number and $n<p-1$ be a natural number. Let $(A, +)$ be an abelian group of cardinality $p^{n}$, where $p$ is a prime number and $n<p-1$ is a natural number.  Let $j>2$ and  let $W\in V_{j}$  be a non-associative  word in variables $X_{1}, \ldots X_{j}$, where each $X_{i}$ appears only once, and
    they appear in the order $X_{1}, \ldots, X_{j}$. 
 Let $i_{1}, \ldots i_{j-1}>0$, where $i_{j}\geq 0$ are all natural numbers.
  There are integers $\beta _{v}$ for $v\in V_{j}$, with only a finite number of these integers  nonzero, and such that
 for each brace $(A, +, \circ )$  with the additive group $(A, +)$   and for each $x_{1}, \ldots , x_{j}$ we have 
\[[\wp^{-1}W\{p^{i_{1}}x_{1}, \ldots , p^{i_{j}}x_{j}\}]=   p^{i_{1}+\ldots +i_{j}-1}W\langle [x_{i}],  \ldots , [x_{j}] \rangle +p^{i_{1}+\ldots +i_{j}}\sum_{v\in V_{j}}\beta _{v}v\langle [x_{1}], \ldots , [x_{j}]   \rangle ,\]
\end{lemma}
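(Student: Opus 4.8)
The plan is to induct on the length (number of letters) of $W$, proving the stated identity together with its companion for $[W\{\ldots\}]$ itself, namely that $[W\{p^{i_1}x_1,\ldots,p^{i_j}x_j\}]=p^{i_1+\cdots+i_j}W\langle[x_1],\ldots,[x_j]\rangle+p^{i_1+\cdots+i_j+1}\sum_{v}\beta'_v\,v\langle[x_1],\ldots,[x_j]\rangle$. The two are equivalent via multiplication by $p$, since $p\,\wp^{-1}(z)=z$ for $z\in pA$ gives $[W\{\ldots\}]=p\,[\wp^{-1}W\{\ldots\}]$; I will use the $\wp^{-1}$-form for the left factor of a product and the plain form for the right factor.

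The backbone of the induction is an exact recursion. Write $W=U\cdot V$ at the outermost bracket, so that $U$ is a word on $X_1,\ldots,X_m$ and $V$ a word on $X_{m+1},\ldots,X_j$ (the variables occur in increasing order, and $V$ carries the final letter $X_j$). Put $\sigma_U=i_1+\cdots+i_m$ and $\sigma_V=i_{m+1}+\cdots+i_j$. Since $i_1>0$ we have $U\{\ldots\}\in p^{\sigma_U}A\subseteq pA$, hence $U\{\ldots\}=p\,\wp^{-1}(U\{\ldots\})$ exactly, and setting $a=\wp^{-1}(U\{\ldots\})$, $b=V\{\ldots\}$ in $[a]\odot[b]=[\wp^{-1}((pa)*b)]$ yields the clean identity
$$[\wp^{-1}W\{\ldots\}]=[\wp^{-1}U\{\ldots\}]\odot[V\{\ldots\}]$$
with no error term; this is the step that trades $*$ for $\odot$. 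Feeding in the inductive formulas $[\wp^{-1}U\{\ldots\}]=p^{\sigma_U-1}U\langle\ldots\rangle+p^{\sigma_U}(\cdots)$ and $[V\{\ldots\}]=p^{\sigma_V}V\langle\ldots\rangle+p^{\sigma_V+1}(\cdots)$, and using $U\langle\ldots\rangle\odot V\langle\ldots\rangle=W\langle\ldots\rangle$, produces the leading term $p^{\sigma_U+\sigma_V-1}W\langle\ldots\rangle=p^{i_1+\cdots+i_j-1}W\langle\ldots\rangle$, with all remaining contributions carrying at least one further factor of $p$. The base cases $m=1$ and $m=2$ are checked directly, using $\wp^{-1}(p^ix)=p^{i-1}x$ and $(p^{i_a}x_a)*(p^{i_b}x_b)=p^{i_b}\bigl((p^{i_a}x_a)*x_b\bigr)$ by right additivity of $*$, after which the definition of $\odot$ applies.

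To expand the $\odot$-product of the two inductive expressions I will use that $\odot$ is exactly additive and $p$-linear in its right slot (from right additivity of $*$ and additivity of $\wp^{-1}$ modulo $ann(p^2)$, both depending only on $(A,+)$), while in the left slot it is additive only up to correction terms. Every resulting cross term is then an integer multiple of a $\odot$-specialisation of a word in $V_j$ and carries the power $p^{i_1+\cdots+i_j}$ or higher. Because the integers supplied by the inductive hypotheses and by $\wp^{-1}$ depend only on the additive group, the $\beta_v$ come out identical for every brace with additive group $(A,+)$; words that are too long are killed by the facts recorded in Remark $1$ (the product of any $p-1$ elements of $pA$ vanishes and $p^{p-1}A=0$), which is what makes $\{\beta_v\}$ finitely supported.

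The main obstacle is the failure of $*$ to be additive, and of $\odot$ to be $p$-linear, in the left slot: relating $(p^kX)\odot Y$ to $p^k(X\odot Y)$ is not exact, and it is exactly here that the correction words with repeated variables are generated. I will control this with the formula from Lemma \ref{co} as the engine for left $p$-scaling, showing that each correction (i) carries strictly more than the power $p^{\,i_1+\cdots+i_j-1}$ of the leading term, so the leading coefficient is untouched, (ii) is an integer combination of $\odot$-words with coefficients independent of the multiplication of $A$, and (iii) terminates, since higher corrections accumulate factors from $pA$ and powers of $p$ that eventually vanish. Verifying that these corrections land precisely at order $p^{\,i_1+\cdots+i_j}$ and above with brace-independent coefficients is the technical heart of the argument; the bookkeeping of which bracketings $v\in V_j$ actually occur is routine once the left-scaling formula is available.
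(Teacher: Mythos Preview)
Your ``clean identity'' $[\wp^{-1}W\{\ldots\}]=[\wp^{-1}U\{\ldots\}]\odot[V\{\ldots\}]$ is correct and pleasant: with $a=\wp^{-1}(U\{\ldots\})$ one has $pa=U\{\ldots\}$ exactly, so the definition of $\odot$ gives it at once. The difficulty is that the single induction on length does not close, and the gap is precisely where you locate it yourself.

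After substituting the inductive expressions for both factors you must compute $(\text{sum of }p^{a}\,\odot\text{-words})\odot(\cdots)$. Right linearity of $\odot$ is exact, but left additivity and left $p$-scaling are not. Your plan is to control the defects via Lemma~\ref{co}; however the resulting corrections are $*$-words (in $pa,pb,c$) of length $\geq 3$, and to turn them back into the $\odot$-words the conclusion demands you would have to invoke the very lemma you are proving, for words that may well be \emph{longer} than $W$. The length induction gives you nothing there. Your remark that ``higher corrections accumulate powers of $p$ and eventually vanish'' is the right instinct, but it is a \emph{second} induction which you have not set up. The same circularity already bites in your base case $j=2$: by your own recipe $(p^{i_{1}}x_{1})*(p^{i_{2}}x_{2})=p^{i_{2}}((p^{i_{1}}x_{1})*x_{2})$ and then $[\wp^{-1}((p^{i_{1}}x_{1})*x_{2})]=(p^{i_{1}-1}[x_{1}])\odot[x_{2}]$, which is again left $p$-scaling of $\odot$, i.e.\ the $j=2$ case itself; so ``checked directly'' hides the whole problem.

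The paper proceeds differently. It runs a double induction: outer on $k=i_{1}+\cdots+i_{j}$ in \emph{decreasing} order (the base $k\geq p-1$ is zero by Proposition~\ref{b}), and for fixed $k$ an inner induction on $j$. For $j=2$ it rewrites $p^{k'}x_{1}$ through $(px_{1})^{\circ p^{k'-1}}$ and Lemma~\ref{citenote}, obtaining
\[
[\wp^{-1}((p^{k'}x_{1})*(p^{m}x_{2}))]=p^{k-1}[x_{1}]\odot[x_{2}]+[\wp^{-1}(c)]+[\wp^{-1}(d)],
\]
where $c,d$ carry total $p$-power $>k$ and are absorbed by the \emph{outer} induction. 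For $j>2$ it does \emph{not} split at the outermost bracket; instead it locates an \emph{innermost} adjacent product $p^{i_{t}}x_{t}*p^{i_{t+1}}x_{t+1}$ inside $W$ and replaces it by $p^{i_{t}+i_{t+1}-1}h+(\text{error})$ with $[h]=[x_{t}]\odot[x_{t+1}]$. The main term is then a $*$-word on $j-1$ letters (inner induction), while the errors have total power $>k$ (outer induction). Working at an innermost product rather than the outermost means one never has to push a sum through the left slot of $\odot$, which is exactly the step your outline cannot complete. Your clean identity could likely be made to work if grafted onto this same $(k,j)$ double induction, but as written the argument has a genuine gap.
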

\begin{proof} Denote 
 \[w=W\{p^{i_{1}}x_{1}, \ldots , p^{i_{j}}x_{j}\}.\]  We proceed on induction, in the decreasing order,  on  
 \[i_{1}+i_{2}+\cdots +i_{j}.\]
  
 If $i_{1}+\cdots +i_{j}\geq p-1\geq n+1$ then $w$ is zero by Proposition \ref{b}. 
 Then \[[\wp^{-1}(w)]=[\wp^{-1}(0)]=[0]=p^{i_{1}+\cdots +i_{j}-1}[\bar  w]\]   since $[p^{p-2}A]=[0]$  so 
 $p^{i_{1}+\ldots +i_{j}-1}[\bar w]=[0].$ Therefore, the result is true.

 Let $k$ be a natural number. Suppose now that the result holds in cases when $power(w)>k$, and for all $j$  and we will show that the result holds also in the case when $power(w)=k$ and for all $j$. 
 For this fixed $power(w)=k$, we will proceed by induction on $j$ (in the  usual increasing order). 
The smallest possible $j$ to consider is $j=2$. 
 Recall that $w$ is a product of elements $p^{i_{1}}x_{1}, \ldots , p^{i_{j}}x_{j}\in A$ under the operation $*$. 
 We proceed by  induction on $j$.

{\bf Case $j=2$.} Notice that for $j=2$ we have $w=(p^{k'}x_{1})*(p^{m}x_{2})$, where $k'+m=k$.  
If $k'=1$ then $w=(px_{1})*x_{2}$.  Consequently \[[\wp^{-1}(w)]=[\wp^{-1}((px_{1})*(p^{m}x_{2})] =
p^{m}[\wp^{-1}((px_{1})*x_{2})]=p^{m}[x_{1}\odot x_{2}]=p^{k-1}[x_{i}\odot x_{2}].\]

 It remains to consider the case $i_{1}=k'>1$, $i_{2}=m$.
 
 By Lemma \ref{citenote}: 
        \[ (px_{1})^{\circ  {k'-1}}=p^{k'}x_{1}+\sum_{i=2}^{p-1}{{p^{k'-1}\choose i}}y_{i},\] 
  where $y_{2}=(px_{1})*(px_{1}), \ldots $ and  $y_{i+1}=(px_{1})*y_{i}$ for $i=2,3, \ldots , p-1$. 
  Consequently \[ (px_{1})^{\circ  {k'-1}}=p^{k'}x_{1}+\sum_{i=2}^{p-1}\zeta _{i}p^{k'-1}y_{i},\]
   for some integers $\zeta _{i}\geq 0$. 
   This follows because  $-x=(p^{p}-1)x=x+\cdots +x$ for $x\in A$.
   Notice that the sum ends at the $p-1$-th place  since $y_{i}\in A^{i}$ and $A^{n+1}=0$ (since $A$ has cardinality $p^{n}$ by \cite{rump}, and $n+1\leq p-1$, hence $A^{p-1}=0$).
       Therefore  
 \[(p^{k'}x_{1} )*(p^{m}x_{2})= ((px_{1})^{\circ p^{k'-1}}+\sum_{i=2}^{p-1}\zeta _{i}p^{k'-1}y_{i})*(p^{m}x_{2}).\]
  By Lemma \ref{co} and Corollary \ref{777} we have 
 \[(p^{k'}x_{1} )*x_{i_{2}}= (px_{1})^{\circ p^{k'-1}}*(p^{m}x_{2})+d\] where
  $d$ is a sum of some products of some number of copies of elements $p^{k'-1}y_{i}$  for $i=2,3, \ldots $ and element $p^{m}x_{2}$ at the end, and also possibly  some number of copies of element $(px_{1})^{\circ p^{k'-1}}$.
   Observe that $p^{k-1}y_{i}$ is a product of $i-1$ copies of element $px_{1}$ and element $p^{k'-1}(px_{1})=p^{k'}x_{1}$ at the end, so $p^{k'-1}y_{i+1}=(px_{1})*((px_{1})*(\cdots *( (px_{1})*(p^{k'}x))\cdots )$,  hence in this presentation $p$ appears at least $k'+1$ times near elements $x_{i}$ in this product which is equal to  $p^{k'-1}y_{i}$ for $i=2, 3, \ldots $.  

  Then, by applying  Lemma \ref{co} several times to the element $(px_{1})^{\circ p^{k'-1}}=p^{k'}x_{1}+\sum_{i=2}^{p-1}p^{k'-1}\zeta _{i}y_{i}$ which appears in these products we get that $d$ is a  sum of some products of elements $p^{k'-1}y_{i}$  and the element $p^{m}x_{2}$ at the end, and possibly also some other elements (which are products of $y_{i}\in pA$ and $p^{k'}x_{1}$). Notice that the process of applying Lemma \ref{co} will terminate at some stage,  because we will obtain longer products of elements from $pA$, and by Proposition \ref{b} such products of more than $p-1$ elements are zero. Notice also  that   $p$ appears at least $k'+1+m=k+1$ times near elements $x_{i}$  in  such products.

  Recall that $[\wp(x+y)]=[\wp^{-1}(x)]+[\wp^{-}(y)]$ for $x,y\in pA$, by \cite{shsm}. 
 Observe now that   
\[[\wp^{-1}((px_{1})^{\circ p^{k'-1}}*(p^{m}x_{2}))]=
[\wp^{-1}(p^{k'-1} ((px_{1})*(p^{m}x_{2})))]+[\wp^{-1}(c)],\]
where \[c=\sum_{i=2}^{p-1}\zeta_{i}\wp^{-1}(p^{k'-1}z_{i}),\]
 where $z_{2}=(px_{1})*((px_{1})*(p^{m}x_{2}))$ and for $i=2, 3, \ldots $ we have  $z_{i+1}=(px_{1})*z_{i}$ (by Lemma \ref{citenote}).
  Notice that  $p^{k'-1}z_{2}=(px_{1})*((px_{1})*(p^{m+k'-1}x_{2}))$ and $p^{k'-1}z_{i+1}=(px_{1})*(p^{k'-1}z_{i})$. 
 Therefore, in this presentation of elements $p^{k'-1}z_{i}$, $p$ appears at least $m'+k'+1$ times near elements $x_{i}$.  
 
Notice also that 
\[[\wp^{-1}(p^{k'-1} ((px_{1})*(p^{m}x_{2})))]=p^{k'-1+m}[\wp^{-1}((px_{1})*x_{2})]=p^{k-1}[x_{1}]\odot [x_{2}].\]

 Combining it all together, we obtain 
 
 \[[\wp^{-1}((p^{k'}x_{1})*(p^{m}x_{2}))]=p^{k-1}[x_{1}]\odot [x_{2}]+[\wp^{-1}(d)]+ [\wp^{-1}(c)].\]
  Observe that this equation was proved without using the inductive assumption, so it is true for all $k'>0, m\geq 0$.
 
  Recall  that $k'+m=k$. 
 Remark $1$ and the inductive assumption on $k$ (applied for numbers larger than $k$) shows that $[\wp^{-1}(d)]+ [\wp^{-1}(c)]$ can be written as some sum   \[p^{k}\sum_{v\in V_{j}}\beta _{v, i_{1}, \ldots , i_{j},w}v\langle [x_{1}], \ldots , [x_{j}]   \rangle \]

  This proves the case $j=2$. 
  
{\bf Case  $j>2$.} Suppose now that $j>2$.
 Recall that $w$ is a product of elements $p^{i_{1}}x_{1}, \ldots , p^{i_{j}}x_{j}\in A$ under the operation $*$. 
  Notice that there is $t$ such that $w$ is a product of elements $p^{i_{1}}x_{1}, \ldots  p^{i_{t-1}}x_{t-1}$ and element  $p^{i_{t}}x_{t}*p^{i_{t+1}}x_{t+1}$ and elements $p^{i_{t+2}}x_{t+2},\ldots ,  p^{i_{j}}x_{j}\in A. $
   
Recall that several lines prior we proved that \[[\wp^{-1}((p^{k'}x_{1})*(p^{m}x_{2}))]=p^{k-1}[x_{1}]\odot [x_{2}]+[\wp^{-1}(d)]+ [\wp^{-1}(c)].\]
  Observe that this equation was proved without the use of inductive assumption, so it is true for all $k'>0,m\geq 0$.
  We can apply it to $p^{i_{t}}x_{t}$ instead of $p^{k'}x_{1}$ and to $p^{i_{t+1}}x_{t+1}$ instead of 
 $p^{m}x_{2}$, and obtain that: 
 \[[\wp^{-1}((p^{i_{t}}x_{t})*(p^{i_{t+1}}x_{t+1}))]=p^{i_{1}+i_{2}-1}[x_{t}]\odot [x_{t+1}]+[\wp^{-1}(d')]+ [\wp^{-1}(c')],\]
 for some $d'$, $c'$ (which were calculated in the same way as $d$ and $c$ above were calculated). 
  Observe that 
  \[c'+d'=\sum_{i}r_{i}\] where each $r_{i}$  is a product of some copies of elements from the set $\{p^{l}x_{t}, p^{l}x_{t+1}:l=1,2, \ldots \}$ and $p$ appears more than  $i_{t}+i_{t+1}$ times near elements $x_{t}, x_{t+1}$ in each product (which is equal  $r_{i}$). 
 
 Therefore, by multiplying the above equation by $p$ we get  
 \[(p^{i_{t}}x_{t})*(p^{i_{t+1}}x_{t+1})=p^{i_{t}+i_{t+1}-1}h+c'+d' +a,\]
 for some $h\in A$ such that $[h]=[x_{1}]\odot [x_{2}]$ and for some $a\in A$. Moreover $a\in ann(p)=\{x\in A:px=0\},$ since $a$ is an element from $ann(p^{2})$ multiplied by $p$. 
 
  By Lemma \ref{co} and Corollary \ref{777} we have that $w=w'+\sum_{i}w_{i}+a'$ where $w'$ (and respectively $w_{i}$ and $a'$) is a product of elements 
 $p^{i_{1}}x_{1}, \ldots  p^{i_{t-1}}x_{t-1}$ and element  $p^{i_{t}+i_{t+1}}h$ (and respectively element $r_{i}$ and $a'$) and elements $p^{i_{t+2}}x_{t+2},\ldots ,  p^{i_{j}}x_{j}\in A$. 
Notice that $a'$ belongs to $ann(p)$, since $ann(p)$ is an ideal in $A$ by \cite{passage}.

Notice that  $w'$ is a product of less than $j$ elements (which are $[x_{i}]$ for $i\neq t$, $i\neq t+1$ and the element $[x_{t}\odot x_{t+1}]$), and  $p$ appears $k$ times in this product (near elements $[x_{i}]$ for $i\neq t$, $i\neq t+1$ and near  the element $[x_{t}\odot x_{t+1}]$),   so we can apply the inductive assumption on $j$ to $w'$ to get 
    \[[\wp^{-1}(w')]=p^{k-1}w\langle [x_{1}], \ldots , [x_{j}]\rangle +p^{k}\sum_{v\in V_{j}}\beta _{v}'v\langle [x_{1}], \ldots , [x_{j}]   \rangle,\]
    for some integers $\beta _{v}'$.  

On the other hand, observe that $p$ appears more than $k$ times when we present each $w_{i}$ as a product of elements 
$p_{l}x_{i}, p_{l}[x_{t}]\odot [x_{t+1}]$ for some $i, l$ (near elements $x_{i}$ and near the element $[x_{t}]\odot [x_{t+1}]$), hence we can apply the inductive assumption on $k$ (applied to numbers larger than $k$) to elements $w_{i}$).  We get that  
    \[[\wp^{-1}(w_{i})]=p^{k}\sum_{v\in V_{j}}\beta _{v}''v\langle [x_{1}], \ldots , [x_{j}]   \rangle \] 
    for some integers $\beta _{v}'$.  
    Notice also that $\wp^{-1}(a)\in ann(p^{2})$ since $a'\in ann(p)$, hence $[\wp^{-1}(a)]=[0]$.
 Observe that we  implicitly used Remark $1$. When combined together  this concludes the proof.
\end{proof}  
 The aim of this section is to prove the following result:
\begin{theorem}\label{11}  Let $p$ be a prime number and $n<p-1$ be a natural number. Let $(A, +)$ be an abelian group of cardinality $p^{n}$.  Let $V$ denote the set of all non-associative words in non-commuting variables $X,Y$ (where $Y$ appears only once at the end of each word and $X$ appears at least twice  in each word in $V$). Then there are integers $\alpha _{w}$, for $w\in V$, such that only a finite number of them is non-zero and that the following holds: 
 For each brace $(A, +, \circ )$  with the additive group $(A, +)$ and for each $a, c\in A$ 
 we have 
\[[a]\odot   [c]= a\bullet  c +p\sum_{w\in V}\alpha _{w}w([a],[c]),\]  where  $w([a],[c])$ is a specialisation of the word $w$ for $X=[a]$, $Y=[c]$, and the multiplication in $w(a,c)$ is the same as the  multiplication in the pre-Lie ring $(A/ann (p^{2}), +, \bullet )$ constructed in Theorem \ref{1} from the brace $(A, +, \circ )$. So for example if $w=((XX)X)Y$ then $w(a,c)=(([a]\bullet [a])\bullet [a])\bullet [c])$.  
\end{theorem}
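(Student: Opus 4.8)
The plan is to start from the defining identity $[a]\odot[c]=[\wp^{-1}((pa)*c)]$ of Lemma \ref{6} and to expand the right-hand side into a $\mathbb Z$-combination of $\bullet$-words, reading off the coefficients $\alpha_{w}$ at the end. The main engine is Lemma \ref{F}: it rewrites $\wp^{-1}$ of any $*$-word whose arguments are scaled by powers of $p$ as the corresponding leading $\odot$-word, carrying the power $p^{\,i_{1}+\cdots +i_{j}-1}$, plus a correction that always carries one extra factor of $p$. This gap of a single power of $p$ between the leading term and its corrections is precisely the source of the factor $p$ in front of the sum in the statement, and keeping track of it is the organising principle of the whole argument. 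Throughout, Remark $1$ guarantees that every coefficient that is produced depends only on the additive group $(A,+)$, so the resulting $\alpha_{w}$ are indeed universal.

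First I would make the multiplicative nature of $pa$ explicit. By Lemma \ref{citepassage} we have $pA=A^{\circ p}$, and expanding a circle power additively through Lemma \ref{citenote} gives $pa=a^{\circ p}-\sum_{k=2}^{p-1}\binom{p}{k}u_{k}$, where $u_{1}=a$, $u_{k}=a*u_{k-1}$ is the $k$-fold left $*$-product of $a$ with itself, and every term beyond the first carries a binomial coefficient divisible by $p$. Substituting this into $(pa)*c$, distributing $*$ by means of Lemma \ref{co} and Corollary \ref{777}, and using that $*$ is additive in its second argument, I obtain a single leading contribution $p\,(a*c)$ together with finitely many $*$-words in which $a$ occurs at least twice and $c$ occurs once at the very end, each attached to a coefficient divisible by $p$; the expansion terminates because any product of more than $p-1$ elements of $pA$ vanishes by Proposition \ref{b}. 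Applying $\wp^{-1}$ and then Lemma \ref{F} to each of these $*$-words converts the whole expression into a $\mathbb Z$-combination of $\odot$-words whose leading term is $[a*c]$.

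Next I would pass from $*$- and $\odot$-words to $\bullet$-words. Writing out the defining formula for $\bullet$ from Theorem \ref{1} (which is built from the same $\wp^{-1}((p\,\cdot)*\cdot)$ data and hence has $[a*c]$ as its own leading term), I would set up an induction on word length, equivalently on the number of occurrences of $X$. At each step a $*$-word of the form $[a*(a*(\cdots *(a*c)))]$ is replaced by the corresponding left-nested $\bullet$-word plus strictly higher $\bullet$-words, again using Proposition \ref{b} for termination. This rewrites $[a]\odot[c]$ as $[a]\bullet[c]+\sum_{w\in V}\gamma_{w}\,w([a],[c])$ for integers $\gamma_{w}$, almost all zero, and shows that every surviving word $w$ contains $X$ at least twice and $Y$ once at the end, so that $w\in V$ as required.

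The hard part is to prove that every $\gamma_{w}$ is divisible by $p$, which is exactly what the statement asserts. The subtlety is that the naive higher terms coming out of the expansion of $\odot$ carry coefficients of the shape $\tfrac1p\binom{p}{k}\equiv\frac{(-1)^{k-1}}{k}\pmod p$, which are \emph{not} divisible by $p$; the divisibility can only emerge after these are matched against the corresponding terms already present in the defining formula for $\bullet$. I would therefore compare the two expansions coefficient by coefficient modulo $p$, showing by the relevant binomial congruences that the non-$p$-divisible parts of $\odot$ coincide exactly with those built into $\bullet$, so that they cancel and only a $p$-divisible remainder survives. Simultaneously one must verify that the inductive conversion of higher words into $\bullet$-words never feeds a non-$p$-divisible contribution back into a word of lower length; this is controlled by the one-extra-power-of-$p$ feature of Lemma \ref{F}, where, for $i_{1}+\cdots +i_{j}=1$, the leading word appears with coefficient $p^{0}$ while all corrections appear with coefficient $p^{1}$. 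Assembling these congruences produces integers $\alpha_{w}$ with $\gamma_{w}=p\,\alpha_{w}$, which yields the desired formula.
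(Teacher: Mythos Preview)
Your proposal has a genuine gap at precisely the point you label ``the hard part.'' You correctly identify that the higher coefficients coming out of the naive expansion of $[a]\odot[c]$ carry factors like $\tfrac{1}{p}\binom{p}{k}$ which are \emph{not} divisible by $p$, and you assert that these will cancel against matching terms in the definition of $\bullet$. But you never carry out this cancellation: ``compare the two expansions coefficient by coefficient modulo $p$'' and ``the relevant binomial congruences'' are placeholders, not arguments. The difficulty is real, because the leading term of $[a]\bullet[c]$ in the $\odot$-expansion is $(p-1)[a]\odot[c]$, not $[a]\odot[c]$, so $[a]\odot[c]-[a]\bullet[c]$ does not even vanish to leading order, and there is no obvious termwise cancellation to appeal to. Your step converting $*$- or $\odot$-words to $\bullet$-words by ``induction on word length'' is also circular as stated: expressing a length-$k$ $\odot$-word in terms of $\bullet$-words is exactly the content of the theorem for that length, so the induction needs an independent input you have not supplied.

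The paper avoids this trap by working in the \emph{opposite} direction first. It uses Corollary~\ref{777} and Lemma~\ref{F} to show
\[
[a]\bullet[c]=(p-1)\,[a]\odot[c]+p\sum_{w}m_{w}\,w\langle[a],[c]\rangle
\]
with $\odot$-words on the right; here the factor $p$ in front of the corrections is automatic from Lemma~\ref{F}, so no delicate cancellation is needed. An easy induction (Part~1) upgrades this to arbitrary words: every $\bullet$-word equals $(p-1)^{j-1}$ times the corresponding $\odot$-word plus $p$ times a combination of $\odot$-words. The passage back to $\odot$ in terms of $\bullet$ is then pure linear algebra (Part~2): arrange the products into vectors $U$ and $V$, write $U=DV+pMV$ with $D$ diagonal having unit entries $(p-1)^{j-1}$, and iterate $V=D'U-pD'MV$ until the remaining power of $p$ kills everything (using $p^{p-1}A=0$). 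This matrix inversion is the missing idea in your sketch; it is what manufactures the $p$-divisibility of the higher coefficients without any binomial bookkeeping.
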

\begin{proof}
   Observe that, by applying Corollary \ref{777} and Lemma \ref{co} several times (and using the fact that $p^{p-1}A=0$)  we can write each element   $[(\xi^{i} pa)*c]$ as a sum of  the element $\xi^{i}[ (pa)*c]$  and  $t(pa,c,i)$, where $t(pa,c, i)$ is  a sum of products of elements $pa$ and element $c$ at the end, with $ap$ appearing at least two times in each product. Consequently 
    \[[\wp^{-1}([(\xi^{i} pa)*c)]=\xi^{i} [\wp^{-1}((pa)*c)]+[\wp^{-1}(t(pa,c,i))].\]
   By Lemma \ref{F} applied to the products which are  summands of  $t(pa, c,i)$ we have: 
   \[[\wp^{-1}([(\xi^{i} pa)*c)]=\xi ^{i}[a]\odot [c]+ p[f(pa,c, i)]\]
   where $f(pa,c,i)$  is a sum of products of  at least two copies of element $a$ and element $b$ appearing only once at the end, under operation $\odot $. Recall the formula for $[a]\bullet [c]$ from Theorem \ref{1}.   By applying this formula we get  \[[a]\bullet  [c]= (p-1) [a]\odot [c]+p\sum_{i=0}^{p-2}\xi ^{i}f(a,c, i).\]    
   
   Notice that $\sum_{i=0}^{p-2}\xi ^{i}f(a,c, i)$ is a sum of some products of $[a]$ and $[c]$ under operation $\odot $ and by Remark $1$ the types of these products do not depend on the particular elements $[a], [c]$ which were used  (they only depend on the additive group of $A$), moreover  a  product may appear several times in this sum.
   
    Therefore
   \[[\wp^{-1}([(\xi^{i} pa)*c)]=\xi ^{i}[a]\odot [c]+ p\sum_{w\in V}\zeta _{w,i} w\langle [a],[c]\rangle \]
   for some $\zeta _{w}$ which don't depend on $a$ and $c$ (and $\zeta _{w}$ are the same in all braces with the same additive group $(A, +)$),
   where  $w\langle [a],[c]\rangle $ is a specialisation of word $w\in V$ for $X=[a]$, $Y=[c]$, and the
    multiplication in $w\langle [a],[c]\rangle $ is $\odot $. For example in $w=(XX)Y$ then $w\langle [a], [c]\rangle =([a]\odot [a])\odot [c]$. 
   Consequently 
   \[[a]\bullet [c]=(p-1)[a]\odot [c]+p\sum_{w\in V}m _{w} w\langle [a], [c]\rangle ,\]
   where  $m_{w}=\sum_{i=0}^{p-2}\xi ^{p-1-i}\zeta _{w,i}$.

  {\bf Part 1.} Let $x_{1}, \ldots , x_{j}\in A$. 
   Let $u$ be a non-associative  word in variables $X_{1}, \ldots X_{j}$ (and each $X_{i}$ appears only once, and
    they appear in the order $X_{1}, \ldots, X_{j}$.) Let $\langle [x_{1}], \ldots , [x_{j}]\rangle $
    be the specialisation  of $u$ for $X_{i}=[x_{i}]$ and under the operation $\odot $, and $u([x_{1}], \ldots , [x_{j}])$ be the specialisation  of $u$ for $X_{i}=[x_{i}]$ and under the operation $\bullet $.
    
  We will  now prove by induction on $j$, that 
  if $[x_{1}], \ldots , [x_{j-1}] \in S_{1}, [x_{j}]\in S_{2}$ then there are integers $m_{w,u}$, such that 
   
  \[u([x_{1}], \ldots  , [x_{j}])=(p-1)^{j -1}u\langle [x_{1}], \ldots  , [x_{j}]\rangle+p\sum_{w\in W}m _{w,u} w\langle [x_{1}], \ldots  , [x_{j}]\rangle .\]
   Moreover, $W$, the set of non-associative words of length at least $3$, and  in $j$  variables $X_{1}, \ldots, X_{j}$ for some $j$, and 
   $w([x_{1}], \ldots , [x_{j}])$, is a specialisation of $w\in W$ for $X_{i}=[x_{i}]$ under the operation $\odot $.

   For $j=2$ the result follows from the first part of our proof, since we have shown that
\[[a]\bullet [c]=(p-1)[a]\odot [c]+p\sum_{w\in W}m _{w} w\langle [a], [c]\rangle .\]   
    Therefore,  for $u=X_{1}X_{2}$ we take  $m_{v, u}=m_{v}$.

    We proceed by induction on $j$. Let $j>2$. There is $1<t<j$ such that 
  $u=vy$ for some word $v$ in variables  $X_{1}, \ldots , X_{t}$, and some word $y$ in variables $X_{t+1}, \ldots , X_{j}$. 
   By the inductive assumption: 
   \[v([x_{1}], \ldots , [x_{t}])=(p-1)^{t-1}\langle [x_{1}], \ldots , [x_{t}]\rangle +p\sum_{w\in W}m_{w,v}w\langle [x_{1}], \ldots , [x_{t}]\rangle \]
   and 
  \[y([x_{t+1}], \ldots , [x_{j}])=(p-1)^{j-t-1}y\langle [x_{t+1}], \ldots , [x_{j}]\rangle +p\sum_{w\in W}m_{w,y}w\langle [x_{t+1}], \ldots , [x_{j}]\rangle .\]

  It follows that,   
 \[u([x_{1}], \ldots , [x_{j}])=v([x_{1}], \ldots , [x_{t}])\bullet  y([x_{t+1}], \ldots , [x_{j}])=\]
 \[=(p-1)^{t-1}v\langle [x_{1}], \ldots , [x_{t}]\rangle +p\sum_{w\in W}m_{w,v}w\langle [x_{1}], \ldots , [x_{t}]\rangle \bullet z\] 
 where \[z= (p-1)^{j-t-1}y\langle [x_{t+1}], \ldots , [x_{j}]\rangle +p\sum_{w\in W}m_{w,y}w\langle [x_{t+1}], \ldots , [x_{j}]\rangle .\]
 
  Recall that operation $\bullet $ is distributive with respect to addition, since $\bullet $ is a pre-Lie operation. 
  Therefore  
 \[u([x_{1}], \ldots , [x_{j}])=(p-1)^{j-2}v\langle [x_{1}], \ldots , [x_{t}]\rangle \bullet  y\langle [x_{t+1}], \ldots  , [x_{j}]\rangle + \]
 \[+p\sum_{w, w'\in W}m_{w,w',v,y}w\langle [x_{1}], \ldots , [x_{t}]\rangle \bullet w'\langle [x_{t+1}], \ldots , [x_{j}]\rangle ,\]
 for some integers $m_{w,w', v, y}$. Moreover, in the summation the word $w$ only depends on variables $X_{1}, \ldots , X_{j}$, and the word $w'$ depends only on variables $X_{t+1}, \ldots , X_{j}$.
  Therefore, 
   \[w\langle [x_{1}], \ldots , [x_{t}]\rangle \odot  w'\langle [x_{t+1}], \ldots , [x_{j}]\rangle=w''\langle [x_{1}], \ldots , [x_{j}]\rangle \] where $w''=ww'$ (so $w'$ is the word which is obtained by putting the word $w'$  after $w$, it could be also written as $w''=(w)(w')$).

    Observe also that \[v([x_{1}], \ldots, [x_{t}])\odot  y\langle [x_{t+1}], \ldots , [x_{j}]\rangle =u\langle [x_{1}], \ldots , [x_{j}]\rangle ,\]
    since $u=vy$.
   
   The result now follows from the formula 
   \[[a]\bullet [c]=(p-1)[a]\odot [c]+p\sum_{w\in W}m _{w} w\langle [a], [c]\rangle ,\]
   applied for $[a]=v\langle [x_{1}], \ldots , [x_{j}]\rangle $ and $[c]=y\langle [x_{1}], \ldots , [x_{j}]\rangle $. 
 We obtain   
   \[v\langle [x_{1}], \ldots , [x_{t}]\rangle \bullet  y\langle [x_{t+1}], \ldots , [x_{j}]\rangle=\]
\[ (p-1)   v\langle [x_{1}], \ldots, [x_{t}]\rangle \odot  y\langle [x_{t+1}], \ldots , [x_{j}]\rangle+p\sum_{w, w'\in W}m_{w,w',v,y}'w\langle [x_{1}], \ldots , [x_{t}]\rangle \bullet w' \langle [x_{t+1}], \ldots , [x_{j}]\rangle ,\]
 for some integers $m_{w,w', v, y}'$.
  
   We can apply a similar argument to $a=w\langle [x_{1}], \ldots , [x_{t}]\rangle $ and 
   $c=  w' \langle [x_{t+1}], \ldots , [x_{j}]\rangle.$
   This can  then be substituted to the right hand side of the  above  equation (which has $u([x_{1}], \ldots [x_{j}])$ 
    on the left hand side) to obtain:
  \[u([x_{1}], \ldots  , [x_{j}])=(p-1)^{j -1}u\langle [x_{1}], \ldots  , [x_{j}]\rangle+p\sum_{w\in W}m _{w,u} w\langle [x_{1}], \ldots  , [x_{j}]\rangle .\]

  {\bf Part 2.} We are now ready to proof our result that 
   \[[a]\odot  [c]=(p-1)[a]\bullet  [c]+p\sum_{w\in V}\alpha _{w} w([a], [c]).\] We start with 
    where $w([a], [c])$ is the specialisation under the operation $\bullet $.

     Let $E_{[a],[c]}\subseteq A$ denote the set of products of some copies of element $[a]$ ($[a]$ appearing  at least once) and element $[c]$ at the end of each word ($[c]$ appearing only once) under the operation $\odot $.
      Moreover, we assume that both $[a]$ and $[c]$ appear in each product in the set $E_{[a],[c]}$. Let $V_{[a],[c]}$ be a vector whose entries are elements from $E_{[a],[c]}$ arranged in  such a way that longer products appear before shorter products.  
  Let $U_{[a],[c]}$ be the corresponding vector obtained from the corresponding products of $[a]$ and $[c]$ under the operation $\bullet $. 
  So for example if $([a]\odot [a])\odot ([a]\odot [c])$ is the $i$-th entry of $V_{[a],[c]}$ then $([a]\bullet [a])\bullet ([a]\bullet  [c])$ is the $i$-th entry of $U_{[a],[c]}$. 
  
  Let $RFM$ denote the set of row-finite matrices (where the rows and columns are  enumerated by the set of natural numbers) with integer entries. It is known that $RMF$ is a ring, and the product of any copies of matrices $M$ and $D$  is well defined and  belongs to $RFM$.

 By  Part $1$ above we obtain 
 
 \[U_{[a],[b]}=   DV_{[a],[b]}+ pMV_{[a],[b]}\] for some matrix $M$ in RFM and some diagonal matrix $D$ in RFM  whose diagonal entries are $(p-1)^{i-1}$ for some $i$. Notice that $(p-1)(-(1+p+p^{2}+\cdots +p^{p-1}))\equiv 1 \mod p^{p}$. Recall also that $p^{p-1}A=0$. 
 Therefore
 \[V_{[a],[b]}=D'U_{[a],[b]}-pD'MV_{[a],[b]},\]
   where $D'$ is a diagonal matrix  whose entries are $(-(1+p+p^{2}+\cdots +p^{p-1}))^{i}$.

  We can now substitute the above formula for $V_{[a], [b]}$ in the right hand side and obtain:
  \[V_{[a],[b]}=D'U_{[a],[b]}-pD'MD'U_{[a], [b]}+p^{2}D'MD'MV_{[a],[b]},\]
  for some matrix $M'$. 
   We can continue to substitute the expression for $V_{[a],[b]}$ on the right hand side. This process will stop after at most $p$ steps, since $p^{p-1}A=0$. This will give 
 \[V_{[a],[b]}=D^{-1}U_{[a],[b]}+pM'U_{[a],[b]},\]
  for some matrix $M'$ from RFM with integer entries.
   This concludes the proof. 
  \end{proof}

\section{ Relations between $*$ and $\odot $}

 In this section we will investigate some properties of the binary operation $\odot $ defined in Lemma \ref{6}. 
 Notice that if $p$ is a prime number and $1\leq i<p$ is a natural number then
 \[{p\choose i}={\frac pi}({{{p-1} \choose {i-1}}}),\] hence ${{{p-1} \choose {i-1}}/i}$ is an integer.

\begin{lemma}\label{7}
  Let $p$ be a prime number. Let $(A, +, \circ )$ be  a brace of cardinality $p^{n}$ with $p>n+1$. Let $a\in A$, define 
  \[f(a)=\sum_{i=1}^{p-1}({{{p-1} \choose {i-1}}/i})e_{i},\]
  where $e_{1}=a$, $e_{2}=a*a$ and $e_{i+1}=a*e_{i}$ for all $i$.
 Then $pf(a)=a^{\circ p}$. 
  Moreover, there are integers $\alpha _{1}, \ldots , \alpha _{p-1}$ which only depend on the additive group $(A, +)$ of the brace $A$ (and  do not depend on element $a$), and  such that 
\[   [a]=\sum_{i=1}^{p-1} \alpha _{i}[f_{i}(a)] ,\]
 where $[f_{1}(a)]=[f(a)]$, $[f_{2}(a)]=[f(a)]\odot [f(a)]$, $[f_{i+1}(a)]=[f(a)]\odot [f_{i}(a)]$, where $\odot $ is defined as in Lemma \ref{6}. 
   Moreover, $\alpha _{1}=1$.  As usual, by $[a]$ we mean $[a]_{ann(p^{2})}$. 
\end{lemma}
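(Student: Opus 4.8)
The plan is to prove the two assertions in turn. For the identity $pf(a)=a^{\circ p}$, I would first establish the power expansion $a^{\circ m}=\sum_{k=1}^{m}\binom{m}{k}e_{k}$ for every $m$, where $e_{k}$ is the left-normed product of the statement. This follows by induction on $m$ from $a^{\circ (m+1)}=a+a^{\circ m}+a*a^{\circ m}$ together with $a*e_{k}=e_{k+1}$, the inductive step reducing to Pascal's identity $\binom{m}{k}+\binom{m}{k-1}=\binom{m+1}{k}$; alternatively one may simply quote Lemma \ref{citenote}. Taking $m=p$ and using that $e_{k}\in A^{k}$ with $A^{p-1}=0$ (since $p-1\geq n+1$ and $A^{n+1}=0$), only the terms $1\leq k\leq p-2$ survive. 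Writing $\binom{p}{k}=\frac{p}{k}\binom{p-1}{k-1}$ and factoring out $p$ then gives $a^{\circ p}=p\sum_{k}(\binom{p-1}{k-1}/k)e_{k}=pf(a)$, each coefficient being an integer as observed just before the lemma.

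For the inversion formula I would work in the cosets $E_{i}:=[e_{i}]\in[A^{i}]$ and show that the family $\{[f_{j}(a)]\}$ arises from $\{E_{i}\}$ by an upper-unitriangular integer change of coordinates depending only on the additive group. Since $[a]\odot[b]=[\wp^{-1}((pa)*b)]$ by Lemma \ref{6}, and $(pa)*b+b\in pA^{i+1}+b$ when $b\in A^{i}$ (as in the proof of Lemma \ref{2}), an induction gives $[f_{j}(a)]\in[A^{j}]$. For the leading term I would use Part $1$ to replace $p\cdot f(a)$ by $a^{\circ p}$: choosing the representative $b=e_{j}+\sum_{i>j}M_{ji}e_{i}\in A^{j}$ provided by the inductive hypothesis $[f_{j}(a)]=E_{j}+\sum_{i>j}M_{ji}E_{i}$, one has $[f_{j+1}(a)]=[\wp^{-1}(a^{\circ p}*b)]$. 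Expanding $a^{\circ p}*b=\sum_{k}\binom{p}{k}\hat e_{k}=p\sum_{k}(\binom{p-1}{k-1}/k)\hat e_{k}$ by Lemma \ref{citenote}, where $\hat e_{k}=a*(a*\cdots*(a*b))$ with $k$ copies of $a$, and applying additivity of $[\wp^{-1}(\cdot)]$, gives $[f_{j+1}(a)]=\sum_{k}(\binom{p-1}{k-1}/k)[\hat e_{k}]$. Because $\hat e_{k}=e_{k+j}+\sum_{i>j}M_{ji}e_{k+i}$, the term $k=1$ contributes $E_{j+1}$ with coefficient $1$ and the remaining terms lie in $\sum_{i\geq j+2}\mathbb{Z}\,E_{i}$. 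Hence $[f_{j}(a)]=E_{j}+\sum_{i>j}M_{ji}E_{i}$ with integers $M_{ji}$ assembled purely from binomial coefficients, so they are the same for every brace with the given additive group.

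Finally, $E_{i}=0$ for $i>n$ because $A^{n+1}=0$, so the system $([f_{j}(a)])_{j}=U(E_{i})_{i}$ is finite with $U$ upper-unitriangular over $\mathbb{Z}$; thus $U^{-1}$ is again upper-unitriangular and integral. Reading off its first row yields $[a]=E_{1}=\sum_{j}(U^{-1})_{1j}[f_{j}(a)]=\sum_{j}\alpha_{j}[f_{j}(a)]$ with $\alpha_{1}=(U^{-1})_{11}=1$ and all $\alpha_{j}\in\mathbb{Z}$ depending only on the additive group.

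I expect the main obstacle to be the second paragraph: verifying that the passage from the $*$-products $E_{i}$ to the $\odot$-products $[f_{j}(a)]$ is genuinely unitriangular with brace-independent integer entries. The delicate points are that the only denominators occurring, the factors $1/k$, always cancel against $\binom{p-1}{k-1}$; that $[\wp^{-1}(\cdot)]$ is additive and annihilates the higher-order corrections modulo $ann(p^{2})$ (an ideal by \cite{passage}); and that every correction term lands in strictly higher filtration degree, so that the leading coefficient stays exactly $1$ at each step. Lemma \ref{F}, Lemma \ref{co} and Corollary \ref{777} supply the bookkeeping for re-expanding the products, and Proposition \ref{b} guarantees that the expansions terminate.
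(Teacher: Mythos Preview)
Your argument is correct and follows essentially the same route as the paper: use Lemma~\ref{citenote} to write $a^{\circ p}=\sum_{k}\binom{p}{k}e_{k}$ (truncated by $A^{p-1}=0$) and factor out $p$; then show inductively that $[f_{j}(a)]=[e_{j}]+\sum_{i>j}M_{ji}[e_{i}]$ via the identity $[f(a)]\odot[b]=[\wp^{-1}(a^{\circ p}*b)]$, and finally invert the upper-unitriangular integer matrix. The only cosmetic difference is that you appeal directly to the well-definedness of $\odot$ on $A/ann(p^{2})$ to work with the representative $b=e_{j}+\sum_{i>j}M_{ji}e_{i}$, whereas the paper carries the correction $a'\in ann(p^{2})$ along and checks $[\wp^{-1}(a^{\circ p}*a')]=[0]$ explicitly; and the auxiliary tools you list in your final paragraph (Lemma~\ref{F}, Lemma~\ref{co}, Corollary~\ref{777}, Proposition~\ref{b}) are not actually needed here, since right distributivity $a*(x+y)=a*x+a*y$ and the additivity of $[\wp^{-1}(\cdot)]$ already suffice.
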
 
\begin{proof} We will use a formula from Lemma \ref{citenote}, namely
 \[a^{\circ p}=\sum_{i=1}^{p-1}{p\choose k}e_{i},\] where  $e_{1}=a$ and $e_{i+1}=a*e_{i}.$ It works since $A^{p}=0$ as $n+1<p$. 
  Observe also that \[[f(a)]\odot [b]=[\wp^{-1}(a^{\circ p}*b)]\] by the definition of $\odot $.
 By using this formula we see that  $[f(a)]=[a]+\sum_{i>1}\beta _{i}[e_{i}]$ for some integers $\beta _{i}$. 
Similarly \[[f(a)]\odot [f(a)]=[\wp^{-1}(a^{\circ p}*f(a))]=[a*a]+\sum_{i>2}\beta _{i}'[e_{i}],\] for some integers $\beta _{i}'$. 
 We proceed by induction on $j$.  Assume that we have proved that 
  \[[f_{j}(a)]=[e_{j}]+\sum_{i>j}\beta _{i}''[e_{i}]\] for some integers $\beta _{i}''$.
  It follows that
  \[f_{j}(a)=e_{j}+\sum_{i>j}\beta _{i}''e_{i}+a',\]
   for some $a'\in ann(p^{2})$. Recall that for $x,y\in pA$ we have $[\wp^{-1}(x+y)]=[\wp^{-1}(x)]+\wp^{-1}(y)]$.   
 It follows that 
 \[[f_{j+1}(a)]=[f(a)]\odot [f_{j}(a)]=[\wp^{-1}(a^{\circ p}*(e_{j}+\sum_{i>j}\beta _{i}''e_{i}+a' ))]=\]
 \[[\wp^{-1}(a^{\circ p}*(e_{j}))] + \sum_{i>j}\beta _{i}''[\wp^{-1}(a^{\circ p}*e_{i})]+[\wp^{-1}(a^{\circ p}*a')]=\]
 \[=[e_{j+1}]+\sum_{i>j+1}\beta _{i}'''[e_{i}]\] for some integers $\beta _{i}'''$.
  Notice that  $[\wp^{-1}(a^{\circ p}*a')]=[0]$ since $p^{2}\wp^{-1}(a^{\circ p}*a')=p(a^{\circ p}*a')=0$, since $a'\in ann(p^{2})$.

 Let $f$ be the vector whose entries are elements $[f_{i}(a)]$ and let $E$ be the vector whose entries are elements $[e_{i}]$.
 Then
 \[f=ME\] for some upper triangular matrix (with integer entries), whose diagonal entries are $1$.
 It follows that $E=M'f$ for some upper diagonal matrix with integer entries with $1$'s on the diagonal 
  (because $p^{n}A=0$). By looking at the first entry of $E$ and the first entry of $M'f$ we get the required conclusion. 
\end{proof} 
\begin{proposition}
 Let notation and assumptions be as in Lemma \ref{7}, and let $b\in A$. Then
  there are integers $\gamma _{1}, \ldots , \gamma _{p-1}$ which only depend on the additive group $(A, +)$ of the brace $A$ and do not depend on element $a$ such that 
\[   [a*b]=\sum_{i=1}^{p-1} \gamma _{i}[q_{i}(a,b)] \]
 where $[q_{1}(a)]=[f(a)]\odot [b]$, $[q_{2}(a)]=[f(a)]\odot  [q_{1}(a,b)]$, 
 $[q_{i+1}(a,b)]=[f(a)]\odot [q_{i}(a, b)]$, where $\odot $ is defined as in Lemma \ref{6}. 
   Moreover, $\gamma _{1}=1$.
    As usual by $[a]$ we mean $[a]_{ann(p^{2})}$. 
\end{proposition}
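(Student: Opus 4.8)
The plan is to follow the proof of Lemma~\ref{7} almost verbatim, replacing the ``seed'' element $a$ used there by the product $a*b$. Put $e_1 = a*b$ and $e_{i+1} = a*e_i$ for $i\ge 1$; since $A$ has cardinality $p^n$ with $n+1<p$ we have $A^{n+1}=0$, so only the finitely many $e_i$ with $i\le p-1$ are nonzero. The point of departure is the identity $[f(a)]\odot[x]=[\wp^{-1}(a^{\circ p}*x)]$ coming from Lemma~\ref{6}, together with the expansion $a^{\circ p}*x=\sum_{k=1}^{p-1}\binom{p}{k}g_k$ from Lemma~\ref{citenote}, where $g_1=a*x$ and $g_{k+1}=a*g_k$. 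Because every $\binom{p}{k}$ with $1\le k\le p-1$ is divisible by $p$, the element $a^{\circ p}*x$ lies in $pA$; applying $\wp^{-1}$ and dividing each coefficient by $p$ (legitimate since $\binom{p}{k}/p=\binom{p-1}{k-1}/k\in\mathbb Z$) yields $[f(a)]\odot[x]=\sum_{k=1}^{p-1}\tfrac1p\binom{p}{k}[g_k]$. Specialising $x=e_i$ makes $g_k=e_{i+k}$, so that
\[ [f(a)]\odot[e_i]=[e_{i+1}]+\sum_{k=2}^{p-1}\frac1p\binom{p}{k}\,[e_{i+k}]. \]
Thus $\odot$-multiplication by $[f(a)]$ acts on the sequence $[e_1],[e_2],\ldots$ as a shift of the index by one, plus a strictly higher-order correction whose coefficients are universal integers depending only on $p$.

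Next I would prove by induction on $j$ that $[q_j(a,b)]=[e_j]+\sum_{i>j}c_{j,i}[e_i]$ for universal integers $c_{j,i}$ with $c_{j,j}=1$. The base case $j=1$ is the displayed formula with $x=b$, whose leading term is $[g_1]=[a*b]=[e_1]$ with coefficient $\binom{p}{1}/p=1$. For the inductive step I would choose the representative $\sum_{i\ge j}c_{j,i}e_i+a'$ of $[q_j(a,b)]$ with $a'\in ann(p^2)$, use that $[f(a)]\odot[\,\cdot\,]$ is additive on cosets (which follows from right-additivity of $*$ in its second argument together with $[\wp^{-1}(x+y)]=[\wp^{-1}(x)]+[\wp^{-1}(y)]$ on $pA$), and apply the shift formula termwise. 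The correction $[\wp^{-1}(a^{\circ p}*a')]=[0]$ vanishes exactly as in Lemma~\ref{7}: since $ann(p^2)$ is an ideal, $a^{\circ p}*a'\in ann(p)$, whence $p^2\wp^{-1}(a^{\circ p}*a')=p(a^{\circ p}*a')=0$. This gives $[q_{j+1}(a,b)]=[e_{j+1}]+(\text{terms in }[e_i],\ i>j+1)$, completing the induction.

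Finally I would package these identities into a single matrix equation. Let $q$ be the (finite) column vector with entries $[q_i(a,b)]$ and $E$ the column vector with entries $[e_i]$; the induction gives $q=ME$ for an upper-triangular integer matrix $M$ with $1$'s on the diagonal. A unipotent upper-triangular integer matrix is invertible over $\mathbb Z$, and the truncation is legitimate because $A^{n+1}=0$, so $E=M^{-1}q$ with $M^{-1}$ again upper-triangular, integral, and with $1$'s on the diagonal. Reading off the first coordinate gives $[a*b]=[e_1]=\sum_{i=1}^{p-1}\gamma_i[q_i(a,b)]$ with $\gamma_i=(M^{-1})_{1,i}$ and $\gamma_1=1$, which is precisely the claim.

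The step I expect to require the most care is the universality of the coefficients, namely that $M$, and hence each $\gamma_i$, is literally the same for every brace sharing the additive group $(A,+)$. This is where Remark~$1$ must be invoked: the binomial factors $\tfrac1p\binom{p}{k}$ are absolute constants, while both $\wp^{-1}$ and the coset map $[\,\cdot\,]=[\,\cdot\,]_{ann(p^2)}$ depend only on $(A,+)$. One must verify that no genuinely multiplicative data enters the coefficients $c_{j,i}$ during the induction, so that the entries of $M$, and therefore of $M^{-1}$, are the claimed universal integers independent of $a$, of $b$, and of the brace structure.
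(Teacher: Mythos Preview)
Your proposal is correct and follows essentially the same route as the paper's proof: both mimic Lemma~\ref{7}, use Lemma~\ref{citenote} to get $[q_j(a,b)]=[e_j']+\sum_{i>j}\beta_i[e_i']$ by induction (with the $ann(p^2)$ correction killed exactly as you describe), and then invert the resulting unipotent upper-triangular integer matrix to extract $[a*b]=[e_1']$. Your write-up is in fact slightly more explicit than the paper's---you spell out the ``shift formula'' $[f(a)]\odot[e_i]=[e_{i+1}]+\sum_{k\ge2}\tfrac1p\binom{p}{k}[e_{i+k}]$ and the universality of the coefficients---whereas the paper simply says ``reasoning similarly as in the proof of Lemma~\ref{7}''.
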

\begin{proof}  The proof is similar to the proof of Lemma \ref{7}. By a formula from Lemma \ref{citenote}, 
\[a^{\circ p}*b=\sum_{i=1}^{p-1}{p\choose k}e_{i}'\]  where  $e_{1}'=a*b$ and $e_{i+1}'=a*e_{i}'$ for $i=1,2, \ldots , p-2$. This formula works since $n+1<p$.
 Notice that 
\[[q_{1}(a,b)]=[f(a)]\odot [b]=[\wp^{-1}((pf(a))*b)]=[\wp^{-1}(a^{\circ p}*b)]=   [a*b]+\sum_{i>1}\beta _{i}[e_{i}']\] for some integers $\beta _{i}$. We will proceed by induction on $j$. 
 Assume that 
 \[[q_{j}(a,b)]=[e_{j}']+\sum_{i>j}\beta _{i}'[e_{i}']\] for some integers $\beta _{i}'$.
 Reasoning similarly as in the proof of Lemma \ref{7} we can show that 
 \[[q_{j+1}(a,b)]=[f(a)]\odot [q_{j}(a,b)]=[e_{j+1}']+\sum_{i>j+1}\beta _{i}''[e_{i}']\] for some integers $\beta _{i}''$.
 Let $f$ be the vector whose entries are elements $[q_{i}(a,b)]$ and let $E$ be the vector whose entries are elements $[e_{i}']$. 
 Then
 \[f=ME\] for some upper triangular matrix (with integer entries) whose diagonal entries are $1$.
 It follows that \[E=M'f\] for some upper diagonal matrix with integer entries with $1$'s on the diagonal 
  (because $p^{n}A=0$). By looking at the first entry of $E$ and the first entry of $M'f$ we get the required conclusion.
 This concludes the proof.
\end{proof}

\section{ Some properties of function $f$}
  Let $p$ be a prime number. Let $(A, +, \circ )$ be  a brace of cardinality $p^{n}$ with $p>n+1$. Let $a\in A$. Recall that 
  \[f(a)=\sum_{i=1}^{p-1}({{{p-1} \choose {i-1}}/i})e_{i},\]
  where $e_{1}=a$, $e_{2}=a*a$ and $e_{i+1}=a*e_{i}$ for all $i$.
 Then $pf(a)=a^{\circ p}$.
 
 In this section we investigate properties of this function.

\begin{theorem}\label{f(a)}
  Let $p$ be a prime number. Let $(A, +, \circ )$ be  a brace of cardinality $p^{n}$ with $p>n+1$. For $a\in A$ let  
  $f(a)=\sum_{i=1}^{p-1}({{{p-1} \choose {i-1}}/i})e_{i}.$
 Then the map 
 \[[a]\rightarrow [f(a)]\] 
 is an injective function on $A/ann(p^{2})$.
  Consequently, since the set $A/ann(p^{2})$ is finite, it follows that this function is a bijection.
 As usual we denote $[a]=[a]_{ann(p^{2})}$.
 \end{theorem}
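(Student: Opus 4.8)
The plan is to use Lemma \ref{7} as an explicit left inverse of the map $[a]\mapsto[f(a)]$: Lemma \ref{7} expresses $[a]$ as a fixed integer combination of iterated $\odot$-powers of $[f(a)]$, so $[f(a)]$ determines $[a]$, which is injectivity; and since $A/ann(p^{2})$ is finite, injectivity upgrades to bijectivity. Before carrying this out I would first check that $[a]\mapsto[f(a)]$ is genuinely a function on $A/ann(p^{2})$.

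Well-definedness is quick. Since $ann(p^{2})$ is an ideal of the brace (Lemma $17$ of \cite{passage}), both $+$ and $*$ descend to the factor brace $A/ann(p^{2})$, with $[a]+[b]=[a+b]$ and $[a]*[b]=[a*b]$. The coefficients $\binom{p-1}{i-1}/i$ occurring in $f$ are integers, so
\[[f(a)]=\sum_{i=1}^{p-1}\binom{p-1}{i-1}/i\,[e_{i}],\]
and each $[e_{i}]$ is just the left-normed $*$-power of $[a]$ computed inside $A/ann(p^{2})$. Thus $[f(a)]$ is obtained by evaluating the very same formula on $[a]$ in the quotient brace, and in particular depends only on $[a]$.

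For injectivity, recall from Lemma \ref{7} that there are integers $\alpha_{1},\dots,\alpha_{p-1}$, depending only on $(A,+)$, with
\[[a]=\sum_{i=1}^{p-1}\alpha_{i}[f_{i}(a)],\qquad [f_{1}(a)]=[f(a)],\quad [f_{i+1}(a)]=[f(a)]\odot[f_{i}(a)].\]
The crucial observation is that each $[f_{i}(a)]$ is produced from $[f(a)]$ alone by iterating the binary operation $\odot$, which is well defined on $A/ann(p^{2})$ by Lemma \ref{6}; hence $[f_{i}(a)]$ is a function of $[f(a)]$ only. Therefore $[f(a)]=[f(b)]$ forces $[f_{i}(a)]=[f_{i}(b)]$ for all $i$, and the displayed reconstruction gives $[a]=\sum_{i}\alpha_{i}[f_{i}(a)]=\sum_{i}\alpha_{i}[f_{i}(b)]=[b]$. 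This proves injectivity, and since $A/ann(p^{2})$ is a finite set an injective self-map of it is automatically a bijection.

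The substance of the argument is entirely contained in the reconstruction identity of Lemma \ref{7}, which already does the hard work of inverting $f$ up to $ann(p^{2})$; the only genuinely new points here are the remark that the iterated $\odot$-powers depend on $[f(a)]$ and nothing else, together with the routine check that $[f(a)]$ is well defined on the quotient. I therefore do not expect a real obstacle beyond keeping these two bookkeeping points straight, and the finiteness step is immediate.
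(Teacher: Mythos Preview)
Your proof is correct, but it follows a genuinely different route from the paper's own argument.

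The paper does \emph{not} invoke Lemma~\ref{7} here. Instead it argues directly: from $[f(a)]=[f(b)]$ one obtains $a^{\circ p}-b^{\circ p}\in ann(p)$ (since $pf(a)=a^{\circ p}$), and then by a downward induction on the length $k$ it shows that every left-normed $*$-product $[x_{1}*(x_{2}*(\cdots *x_{k}))]$ with $x_{i}\in\{a,b\}$ is independent of the choice of the $x_{i}$. The inductive step expands $a^{\circ p}$ and $b^{\circ p}$ via Lemma~\ref{citenote}, inserts them at each position of the product, applies $\wp^{-1}$, and compares; the case $k=1$ then gives $[a]=[b]$.

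Your approach is shorter and more conceptual: you simply observe that the reconstruction identity of Lemma~\ref{7}, together with the fact that $\odot$ is well defined on $A/ann(p^{2})$ (Lemma~\ref{6}), already furnishes an explicit left inverse to $[a]\mapsto[f(a)]$. This buys brevity, at the cost of depending on Lemma~\ref{7}. The paper's argument, by contrast, is self-contained within the theorem and yields along the way the finer information that all mixed left-normed $*$-products in $a,b$ coincide in the quotient --- a statement not visible from your proof, though also not needed elsewhere.
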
  
 \begin{proof} Let $a,b\in A$. Suppose that $[f(a)]=[f(b)]$ then $f(a)-f(b)\in ann(p^{2})$, hence 
 $p(p(f(a)-pf(b))=0$, so $pf(a)-pf(b)\in ann(p)$. Recall that $pf(a)=a^{\circ p}$, hence 
 \[a^{\circ p}=b^{\circ p}+e',\] for some $e'\in ann(p)$.
  
   We will now show that all products 
 \[[x_{1}*(x_{2}(*\cdots *(x_{k-1}*x_{k})\cdots ))]\] for $x_{1}, \ldots , x_{k}\in\{a,b\}$
 are equal. 
   
 For $k=n+1$ the result is true because all such products of length $n+1$ will be zero since
  $A^{n+1}=0$. We proceed by induction  on $k$ in the decreasing order. Let $i$ be a natural number, $i<n+1$.
   We will show the result is true for $k=i$ provided that it is true for all  
 $k>i$. 
 
  We will first show that 
 \[[a*(x_{1}*(x_{2}(*\cdots *(x_{k-2}*x_{k-1})\cdots )))]=[b*(x_{1}*(x_{2}(*\cdots *(x_{k-2}*x_{k-1})\cdots )))]\] for 
  all  $x_{1}, \ldots x_{k}\in \{a,b\}$.

 Observe that   $a^{\circ p}-b^{\circ p}\in ann(p)$ yields  
 \[a^{\circ p}*(x_{1}*(x_{2}(*\cdots *(x_{k-2}*x_{k-1})\cdots ))=\]
 \[b^{\circ p}*(x_{1}*(x_{2}(*\cdots *(x_{k-2}*x_{k-1})\cdots ))+e,\]
  for some $e\in ann(p)$, since $ann(p)$ is an ideal in the brace $A$ by \cite{passage}. It follows 
  from Lemma \ref{co} applied to $a'=a^{\circ p}$ and $b'=b^{\circ p}-a^{\circ p}$ and $c'=x_{1}*(x_{2}(*\cdots *(x_{k-2}*x_{k-1})\cdots )$. 
 
  By Lemma \ref{citenote} we have:
 \[a^{\circ p}*(x_{1}*(x_{2}(*\cdots *(x_{k-2}*x_{k-1})\cdots ))=\]
 \[=pa*(x_{1}*(x_{2}(*\cdots *(x_{k-2}*x_{k-1})\cdots )))+{\frac {p(p-1)}2}a*(a*(x_{1}*(x_{2}(*\cdots *(x_{k-2}*x_{k-1})\cdots )))+\cdots .\]
 
 Similarly,
 \[b^{\circ p}*(x_{1}*(x_{2}(*\cdots *(x_{k-2}*x_{k-1})\cdots ))=\]
 \[=pb*(x_{1}*(x_{2}(*\cdots *(x_{k-2}*x_{k-1})\cdots ))) +{\frac {p(p-1)}2}b*(b*(x_{1}*(x_{2}(*\cdots *(x_{k-2}*x_{k-1})\cdots ))) +\cdots .\]  
 
  The above three equations combined together  imply after  applying $\wp^{-1}$ to both sides  
  \[[a*(x_{1}*(x_{2}*(\cdots *(x_{k-2}*x_{k-1})\cdots )))+{\frac {p-1}2}a*(a*(x_{1}*\cdots *(x_{k-2}*x_{k-1})\cdots ))+\cdots ]=\]
  \[[b*(x_{1}*(x_{2}*(\cdots *(x_{k-2}*x_{k-1})\cdots ))) +{\frac {p-1}2}b*(b*(x_{1}*\cdots *(x_{k-2}*x_{k-1})\cdots ))+\cdots ] +[\wp^{-1}(e)].\]
 
 Notice that $[\wp^{-1}(e)]=[0]$ since $p^{2}\wp^{-1}(e)=pe=0$. 
 Notice that by the inductive assumption
 \[[a*(a*(x_{1}*(*\cdots *(x_{k-2}*x_{k-1})\cdots )))]=[b*(b*(x_{1}*(*\cdots *(x_{k-2}*x_{k-1})\cdots )))].\]
  This also holds for the next products in the above sum (involving more $a$ and $b$) by the inductive assumption.
  
  The two above arguments show that 
 \[[a*(x_{1}*(x_{2}(*\cdots *(x_{k-2}*x_{k-1})\cdots )))]=[b*(x_{1}*(x_{2}(*\cdots *(x_{k-2}*x_{k-1})\cdots )))]\] for 
  this fixed $k$ and for all $x_{1}, \ldots x_{k-1}\in \{a,b\}$.  Notice now that we can use a similar argument by putting in the ith place  $a^{\circ p}$ on the left-hand side and 
 $b^{\circ p}$ on the right hand-side, without changing the elements $x_{1}, x_{2}, \ldots, x_{k-1} $:
 \[x_{1}*(\cdots *(x_{j}*(a^{\circ p}*(x_{j+1}*(\cdots *(x_{k-2}*x_{k-1})\cdots ))))\cdots )=\]
 \[x_{1}(*\cdots *(x_{j}*(b^{\circ p}*(x_{j+1}*(*\cdots *(x_{k-2}*x_{k-1})\cdots ))))\cdots )+e''\] for 
  all $x_{1}, \ldots x_{k-1}\in \{a,b\}$, for some $e''\in A$. Reasoning similarly as when we did to show  that $e\in ann(p)$, we obtain that $e''\in ann(p)$. 
  By Lemma \ref{citenote} we have 
  \[a^{\circ p}*(x_{j+1}*(\cdots *(x_{k-2}*x_{k-1})\cdots ))=\sum_{i=1}^{p-1}{p\choose i}y_{i}\] where
  $y_{1}= a*(x_{j+1}*(\cdots *(x_{k-2}*x_{k-1})\cdots )))$, and $y_{i+1}=a*y_{i}$ for $i=1,2, \ldots , p-2$.
   Similarly 
  \[b^{\circ p}*(x_{j+1}*(\cdots *(x_{k-2}*x_{k-1})\cdots )))=\sum_{i=1}^{p-1}{p\choose i}y_{i}'\] where
  $y_{1}'= b*(x_{j+1}*(\cdots *(x_{k-2}*x_{k-1})\cdots )))$, and $y_{i+1}=b*y_{i}$ for $i=1,2, \ldots , p-2$.
  
  By applying  $\wp ^{-1}$ to both sides of equation 
  \[x_{1}*(\cdots *(x_{j}*(a^{\circ p}*(x_{j+1}*(\cdots *(x_{k-2}*x_{k-1})\cdots ))))\cdots )=\]
 \[x_{1}(*\cdots *(x_{j}*(b^{\circ p}*(x_{j+1}*(*\cdots *(x_{k-2}*x_{k-1})\cdots ))))\cdots )+e''\] 
  we obtain after taking cosets 
 \[[x_{1}*(\cdots *(x_{j}*\sum_{i=1}^{p-1}\alpha _{i}y_{i})\cdots )]=[x_{1}*(\cdots *(x_{j-1}*\sum_{i=1}^{p-1}\alpha _{i}y_{i}')\cdots )],\] where ${p\choose i}=p\alpha _{i}$ for each $i$, so $\alpha _{1}=1$.  
    By the inductive assumption, 
 \[[x_{1}*(\cdots *(x_{j}*y_{i})\cdots )]=[x_{1}*(\cdots *(x_{j}*y_{i}')\cdots )]\] for $i>1$.
Therefore
 \[[x_{1}*(\cdots *(x_{j}*y_{1})\cdots )]=[x_{1}*(\cdots *(x_{j-1}*y_{1}')\cdots )].\] 
 By the definition of $y_{1}$, we obtain  
 \[[x_{1}*(\cdots *(x_{j}*(a*(x_{j+1}*(\cdots *(x_{k-2}*x_{k-1})\cdots ))))]=\]
 \[[x_{1}*(\cdots *(x_{j}*(b*(x_{j+1}*(*\cdots *(x_{k-2}*x_{k-1})\cdots ))))]\] for 
  all $x_{1}, \ldots x_{k-1}\in \{a,b\}.$

  We have thus proved that 
 \[[x_{1}*(x_{2}(*\cdots *(x_{k-1}*x_{k})\cdots ))]=[x_{1}'*(x_{2}'(*\cdots *(x_{k-1}'*x_{k}')\cdots ))],\] 
  provided that $x_{j}=x_{j}'$ in all places except in one place, and all $x_{i}, x_{i}'\in \{a,b\}$. 

 Observe now that when we change one index at a time this implies:
 \[[x_{1}*(x_{2}(*\cdots *(x_{k-1}*x_{k})\cdots ))]=[x_{1}*(x_{2}(*\cdots *(x_{k-1}*x_{k}')\cdots ))],\]
 and then  
 \[[x_{1}*(x_{2}(*\cdots *(x_{k-1}*x_{k}')\cdots ))]=[x_{1}*(x_{2}(*\cdots *(x_{k-2}*(x_{k-1}'*x_{k}'))\cdots )))],\]
 and  continuing to change the $i$-th  digit at $i$-th  time we eventually get 
 \[[x_{1}'*(x_{2}'(*\cdots *(x_{k-1}'*x_{k})\cdots ))]=[x_{1}'*(x_{2}'(*\cdots *(x_{k-1}'*x_{k}')\cdots ))].\] 
  These equations imply that 
 
 \[[x_{1}*(x_{2}(*\cdots *(x_{k-1}*x_{k})\cdots ))]=[x_{1}'*(x_{2}'(*\cdots *(x_{k-1}'*x_{k}')\cdots ))].\]
 This proves the inductive argument.

 Notice that for $k=1$ we have $[a]=[b]$, as required.
 \end{proof}
 \begin{theorem}\label{g(a)}
  Let $p$ be a prime number. Let $(A, +, \circ )$ be  a brace of cardinality $p^{n}$ with $p>n+1$. For $a\in A$
 define \[g(a)=f^{(p^{p}!-1)}(a)\] where $f^{(1)}(a)=f(a)$ and for every $i$ we denote $f^{(i+1)}(a)=f(f^{(i)}(a))$. 
 Then \[[f(g(a))]=[g(f(a))]=[a].\]
  Moreover
  \[[f(x)]=[g^{(p^{p}!-1)}(a)]\]  where $g^{(1)}(a)=a$ and for every $i$ we denote $g^{(i+1)}(a)=g(g^{(i)}(a))$.
 \end{theorem}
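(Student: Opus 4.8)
The plan is to recognise that Theorem~\ref{f(a)} turns the whole statement into an elementary fact about the order of a permutation of a finite set. By Theorem~\ref{f(a)} the map
\[
F:A/ann(p^{2})\to A/ann(p^{2}),\qquad F([a])=[f(a)],
\]
is a well-defined bijection of the finite set $A/ann(p^{2})$, so $F$ is an element of the symmetric group $\mathrm{Sym}(A/ann(p^{2}))$. First I would record the cardinality bound: since $A/ann(p^{2})$ is a quotient of the additive group $(A,+)$, its cardinality $N$ divides $|A|=p^{n}$, and because $n<p-1<p$ we have $N\le p^{n}<p^{p}$. Consequently $N!$ divides $(p^{p})!$.

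By Lagrange's theorem the order of $F$ in $\mathrm{Sym}(A/ann(p^{2}))$ divides $N!$, hence divides $(p^{p})!$. Therefore $F^{(p^{p})!}$ is the identity permutation, which is exactly the statement
\[
[f^{(p^{p}!)}(a)]=[a]\qquad\text{for all }a\in A,
\]
where I use the iterate identity $f^{(k)}(f(a))=f^{(k+1)}(a)$ (an immediate induction on $k$). Writing $g(a)=f^{(p^{p}!-1)}(a)$, this gives at once
\[
[g(f(a))]=[f^{(p^{p}!-1)}(f(a))]=[f^{(p^{p}!)}(a)]=[a],
\qquad
[f(g(a))]=[f^{(p^{p}!)}(a)]=[a],
\]
so $[a]\mapsto[g(a)]$ is the inverse permutation $F^{-1}=F^{(p^{p}!-1)}$. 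For the final assertion (the $x$ on the left should read $a$) I would iterate $g$: since $G:=F^{(p^{p}!-1)}=F^{-1}$, we have
\[
G^{(p^{p}!-1)}=F^{-(p^{p}!-1)}=F^{-(p^{p}!)}\circ F=F,
\]
using $F^{(p^{p}!)}=\mathrm{id}$; this is precisely $[g^{(p^{p}!-1)}(a)]=[f(a)]$.

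I do not expect a genuine obstacle here, since Theorem~\ref{f(a)} supplies the one nontrivial input (bijectivity of $F$). The only points requiring care are bookkeeping: first, checking that $[a]\mapsto[g(a)]$ and its iterates are well defined, which follows automatically because $g$ and $g^{(k)}$ are iterates of $f$ and $F$ is already known to be a well-defined map on cosets; second, the composition identities $f^{(k)}(f(a))=f^{(k+1)}(a)$ and $g^{(k)}=F^{-k}$ on cosets; and third, the cardinality estimate $N\le p^{n}<p^{p}$ that makes $(p^{p})!$ an exponent annihilating $F$. All three are routine, so the proof reduces to the single conceptual step that an iterated bijection of a finite set of size less than $p^{p}$ returns to the identity after $(p^{p})!$ steps.
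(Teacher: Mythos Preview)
Your proof is correct and follows essentially the same approach as the paper. Both arguments hinge on Theorem~\ref{f(a)} to get that $F:[a]\mapsto[f(a)]$ is a bijection of a finite set of size $N\le p^{n}<p^{p}$, and then conclude $F^{(p^{p})!}=\mathrm{id}$; the paper reaches this via an explicit pigeonhole on the iterates $[f^{(1)}(a)],\ldots,[f^{(p^{n}+1)}(a)]$ to find a period $j-i\le p^{n}\mid (p^{p})!$, whereas you invoke Lagrange in $\mathrm{Sym}(A/ann(p^{2}))$ --- a slightly cleaner packaging of the same idea. Your observation about the typo ($x$ should be $a$, and the convention $g^{(1)}(a)=a$ in the statement is inconsistent with the computation) is also apt: the paper's own proof computes $[g^{(p^{p}!-1)}(a)]=[f^{((p^{p}!-1)^{2})}(a)]$, tacitly using $g^{(1)}=g$, exactly as you do.
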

 \begin{proof} 
  Observe first that since $[a]\rightarrow [f(a)]$ is a bijection on $A/ann(p^{2})$, then 
  for every $i$, $[f^{(i)}(a)]\rightarrow [f^{(i+1)}(a)]$ is a bijective function on $A/ann(p^{2})$, and hence for every $k>0$
  \[[a]\rightarrow [f^{(k)}(a)]\] is a bijective function on $A/ann(p^{2})$.  
 
Notice that there are $p^{n}+1$ elements  $[f^{(1)}(a)], [f^{(2)}(a)], \cdots , [f^{(p^{n}+1)}(a)]$, and since brace $A$ has cardinality $p^{n}$ it follows that \[[f^{(i)}(a)]=[f^{(j)}(a)],\]
for some $1\leq i<j\leq p^{n}+1$, hence $j-i\leq p^{n}\leq p^{p}$.
 Notice that the function
  \[[f^{(i)}(a)]\rightarrow [a] \] is a bijective function on $A/ann(p^{2})$. 
  Applying this function to both sides of equation
  \[[f^{(i)}(a)]=[f^{(j)}(a)],\]
  we obtain 
 \[[a]=[f^{(j-i)}(a)].\]
 Notice that $j-i\leq p^{n}$ implies that $j-i$ divides $p^{n}!$, so it divides $p^{p}!$. Therefore 
  \[[f^{(p^{p}!)}(a)]=[a].\]
   This shows that \[[f(g(a))]=[g(f(a))]=[a].\]
 
 On the other hand, by the first part of this proof it follows that  $[a]\rightarrow [g(a)]$ is a bijective function, since   \[[g(a)]=[f^{(p^{p}!-1)}(a)].\]
 
 Observe now that \[[g^{(p^{p}!-1)}(a)]=[f^{((p^{p}!-1)^{2})}(a)]=[f(a)],\]
  since $[f^{(p^{p}!)}(a)]=[a]$.
 \end{proof}

\section{ Recovering braces from pre-Lie algebras}

We are now able to prove the following theorem:

\begin{theorem}\label{789}  Let $p$ be a prime number and $n<p-1$ be a natural number. Let $(A, +)$ be an abelian group of cardinality $p^{n}$.  Let $V$ denote the set of all non-associative words in non-commuting variables $X,Y$ (where $Y$ appears only once at the end of each word and $X$ appears at least once in each word in $V$). Then there are integers $e _{w}$, for $w\in V$, such that only a finite number of them is non-zero and that the following holds: 
 For each brace $(A, +, \circ )$  with the additive group $(A, +)$ and for each $a, c\in A$ 
 we have 
\[[a]* [c]=\sum_{w\in V}e _{w}w([a],[c]),\]  where  $w([a],[c])$ is a specialisation of the word $w$ for $X=[a]$, $Y=[c]$, and the multiplication in $w(a,c)$ is the same as the  multiplication in the pre-Lie ring $(A/ann (p^{2}), +, \bullet )$ constructed in Theorem \ref{1} from the brace $(A, +, \circ )$. So for example if $w=((XX)X)Y$ then $w(a,c)=(([a]\bullet [a])\bullet [a])\bullet [c])$.  
\end{theorem}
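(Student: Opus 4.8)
The plan is to assemble the desired formula from three ingredients already in place: the expression of $[a]*[c]$ through the operation $\odot$ applied to $[f(a)]$ and $[c]$ (the Proposition following Lemma~\ref{7}), the expression of $[f(a)]$ itself as a universal $\odot$-polynomial in $[a]$, and finally the translation of $\odot$ into $\bullet$ furnished by Theorem~\ref{11}. Throughout, by a \emph{universal $\odot$-polynomial} I mean an integer combination of non-associative $\odot$-words whose coefficients depend only on the additive group $(A,+)$; by Remark~$1$ every constant produced below is of this kind, and all sums are finite because products of more than $n$ factors vanish in $A/ann(p^{2})$.

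First I would show that $[f(a)]$ is a universal $\odot$-polynomial in $[a]$. Applying Lemma~\ref{7} to $g(a)$ in place of $a$ and using $[f(g(a))]=[a]$ from Theorem~\ref{g(a)}, the iterated terms $[f_{i}(g(a))]$ become the left-normed $\odot$-powers $[a]^{\odot i}$, so that $[g(a)]=\sum_{i=1}^{p-1}\alpha_{i}[a]^{\odot i}=:G([a])$ for a fixed universal $\odot$-polynomial $G$ with leading coefficient $\alpha_{1}=1$. Since this identity holds for every element simultaneously, it may be composed with itself, and an easy induction gives $[g^{(k)}(a)]=G^{(k)}([a])$, the $k$-fold composition, again a universal $\odot$-polynomial. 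Theorem~\ref{g(a)} writes $f$ as the iterate $[f(a)]=[g^{(p^{p}!-1)}(a)]$, whence $[f(a)]=G^{(p^{p}!-1)}([a])=:F([a])$ is a universal $\odot$-polynomial in $[a]$.

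Next I would feed $F$ into the Proposition following Lemma~\ref{7}, which gives $[a*c]=\sum_{i=1}^{p-1}\gamma_{i}[q_{i}(a,c)]$, where each $q_{i}$ is a left-normed $\odot$-product of the single symbol $[f(a)]$ and of $[c]$ (appearing once at the end). Replacing every occurrence of $[f(a)]$ by $F([a])$ and expanding by distributivity of $\odot$ over addition yields $[a]*[c]=[a*c]=R([a],[c])$ for a universal $\odot$-polynomial $R$ in the two symbols $X=[a]$, $Y=[c]$ in which $Y$ occurs exactly once and at the end of each word, which is precisely the shape handled by Theorem~\ref{11}. Finally I would convert $R$ from $\odot$-form to $\bullet$-form: Theorem~\ref{11} (its Part~$1$ together with the row-finite matrix inversion in Part~$2$) shows that every such $\odot$-word is an integer combination of $\bullet$-words in $[a]$ and $[c]$ with coefficients depending only on $(A,+)$; applying this dictionary word by word to $R$ produces the asserted identity $[a]*[c]=\sum_{w\in V}e_{w}w([a],[c])$. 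Finiteness of the nonzero $e_{w}$ and their dependence on the additive group alone are inherited from the corresponding properties of the $\gamma_{i}$, of $G$, and of the conversion matrices.

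The main obstacle is the second step: one must justify that iterating the universal polynomial $G$ a factorial number of times still produces a genuine finite universal $\odot$-polynomial valid for all braces with the given additive group at once. This rests on the strong, bracketing-independent nilpotency of $\odot$ on $A/ann(p^{2})$—that every product of more than $n$ factors vanishes regardless of association, which follows from $A^{n+1}=0$ together with $A^{i}*A^{j}\subseteq A^{i+j}$—so that the formal composition $G^{(p^{p}!-1)}$ collapses to finitely many words of bounded degree with integer coefficients; and on the universality of $G$, which is what legitimises composing the identity $[g(u)]=G([u])$ with itself and then specialising $u$ to $g(a)$.
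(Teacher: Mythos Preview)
Your overall strategy matches the paper's exactly: express $[g(a)]$ as a universal $(\odot,+)$-expression in $[a]$ via Lemma~\ref{7} applied at $g(a)$ together with Theorem~\ref{g(a)}, iterate to obtain $[f(a)]$, feed this into the Proposition following Lemma~\ref{7} to get $[a]*[c]$ as a universal $(\odot,+)$-expression, and finally convert to $\bullet$ via Theorem~\ref{11}. The ingredients and the order in which you assemble them are the same as in the paper.

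There is, however, a genuine gap in the execution. You repeatedly assert that the intermediate objects are universal $\odot$-\emph{polynomials}, i.e.\ integer combinations of $\odot$-monomial words, and you invoke ``distributivity of $\odot$ over addition'' to expand substitutions. But $\odot$ is only distributive in its \emph{second} argument: from $[a]\odot[b]=[\wp^{-1}((pa)*b)]$ and the brace identity $x*(y+z)=x*y+x*z$ one gets $[a]\odot([b]+[c])=[a]\odot[b]+[a]\odot[c]$, whereas $(pa+pa')*b\neq (pa)*b+(pa')*b$ in general (this failure is precisely what Lemma~\ref{co} quantifies), so $([a]+[a'])\odot[b]$ need not split. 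Consequently, when you form $G^{(2)}([a])=\sum_i\alpha_i h_i(G([a]))$ and try to expand the left-normed $\odot$-powers of the \emph{sum} $G([a])$, you cannot distribute across the left factor; the composition $G^{(k)}$ is therefore not an $\odot$-polynomial, and neither is the result of substituting $F([a])$ for $[f(a)]$ inside the $q_i$. The ``main obstacle'' you flag at the end---finiteness of the composite under nilpotency---is real but secondary; the primary obstacle is that the expansion you perform is not valid for $\odot$.

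The paper avoids this by never claiming the intermediate object is an $\odot$-polynomial. It records only that $[a]*[c]$ is produced by a fixed \emph{algorithm} in the operations $+$ and $\odot$ applied to copies of $[a]$ and $[c]$, the algorithm depending only on $(A,+)$. It then replaces every occurrence of $\odot$ by the $\bullet$-formula of Theorem~\ref{11} (legitimate since that formula holds for arbitrary inputs, so one may take the current subexpression as input), obtaining an expression built from $+$ and $\bullet$ alone. Only \emph{then}, using that $\bullet$ is a genuine pre-Lie multiplication and hence bilinear, does it expand into a finite integer combination of $\bullet$-words. In short: swap your last two steps---convert to $\bullet$ first, expand afterwards---and your argument goes through.
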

\begin{proof}   
  By Lemma \ref{7} we have that \[[a]=\sum_{i=1}^{p-1} \alpha _{i}[f_{i}(a)]\] where $[f_{i+1}(a)]=[f(a)]\odot [f_{i}(a)]$. 
   By applying it for $a=g(a)$, where $g(a)$ is as in Theorem \ref{g(a)}, so $g(a)=f^{(p^{p}!-1)}(a)$ and $[f(g(a))=[a]$, we obtain:
   \[[g(a)]=\sum_{i=1}^{p-1} \alpha _{i}[h_{i}(a)]\] where $[h_{1}(a)]=[a]$ and $[h_{i+1}(a)]=[a]\odot [h_{i}(a)]$ and $\alpha _{i}$ are some integers. Observe that integers  $\alpha _{i}$ only depend on the additive group $(A, +)$ of the brace $A$
    and do not depend on the multiplicative group $(A, \circ )$ and do not depend on element $a$. Therefore $[g(a)]$ can be obtained by applying the operations $+$ and $\odot $ several times to some copies of element $[a]$, and the method is the same for all braces with the same additive group $(A, +)$.
    
    {\bf Fact 1.}  By Theorem \ref{g(a)} we obtain 
    $[f(a)]=[g^{p^{p}!-1}(a)]$, therefore $[f(a)]$ can be obtained by applying the operations $\odot $ and $+$ several times to some copies of element $[a]$, and the method and order of applying 
     these operations does not depend on element $a$, and the same method works for all braces with the additive group $(A, +)$. 

    {\bf Fact 2.}  By Lemma \ref{7} we have that $[a*b]=[a]*[b]$ can be written by using operations $\odot $ and $+$ applied several times  to  some copies of element $[f(a)]$ and to the element  $[b]$ and to sums of the obtained elements 
    (and the method does not depend on elements $f(a)$ and $b$) and the same method works for all braces with the additive group $(A, +)$. 
 
  $ $
      
  By combining the above Fact $1$ and Fact $2$ we obtain that $[a]*[b]$ can be obtained by applying operation $\odot $ to elements $a$ and $b$, and the algorithm for which order to apply these operations $\odot, +$  does not depend on elements $a, b$. Moreover  the same method works for all braces with the additive group $(A, +)$.

  By using Theorem \ref{1}  we can write product $\odot $ by using the pre-Lie operation $\bullet $ and $+$. Notice that the operation $+$ is this pre-Lie ring is the same as the addition in the factor brace $(A/ann(p^{2}), +, \circ )$, so it only depends on the multiplicative group of brace $A$.
  Since $\bullet $ is a pre-Lie algebra multiplication it is distributive with respect to the addition, the obtained result 
  can be simplified and written as 

  \[\sum_{w(a,b)\in W}\beta _{w}w(a,b),\] where $W$ is the set of products of some number of element $[a]$ and element $[b]$ at the end under the pre-Lie operation $\bullet $, 
 where $\beta _{w}$ are some integers which don't depend on elements $a, b$ and do not depend on  the multiplicative group of  the brace $A$ (and only depend on the additive group $(A, +)$). This concludes the proof. 
 \end{proof}

\section{ Braces $A/ann(p^{4})$ and groups of flows }

In this section we will prove Theorem \ref{Main}.
 Notice that, as shown in \cite{passage},  the construction of the group of flows is well defined for every left nilpotent pre-Lie ring $(A, +, \cdot)$. Moreover, the group of flows  is a brace with the same addition as in the original pre-Lie ring, and we will call this brace just the group of flows of the pre-Lie ring $(A, +, \cdot)$.
 The construction of the group of flows was first introducted in \cite{AG}. The connection between braces and groups of flows was discovered by Wolfgang Rump in 2014 \cite{Rump}. 
 
We begin with a Lemma similar to the result obtained in the last section of  \cite{shsm}.
 \begin{lemma}\label{similar} 
 Let $(A, +, \cdot)$ be a left nilpotent pre-Lie ring of cardinality $p^{n}$ for some prime number $p$ and some natural number $n<p-1$. Let $(A/ann(p^{2}), +, \cdot)$ be the factor pre-Lie ring by the ideal $ann(p^{2})=\{a\in A:p^{2}a=0\}$. Elements of this pre-Lie ring are cosets $[a]=\{a+i:i\in ann(p^{2})\}$. 
 We denote the multiplication and addition in this pre-Lie ring with the same symbols a in the original ring:
 \[[a]\cdot [b]=[a\cdot b], [a]+[b]=[a+b].\]
 Let $(A, +, \circ)$ be the brace obtained from the pre-Lie ring $(A, +, \cdot)$ using the construction of the group of flows. 
 Let $(A/ann (p^{2}), +, \bullet )$ be the corresponding pre-Lie ring, constructed as in \cite{shsm} from the brace $(A, +, \circ )$ (notice that this construction is quoted in Theorem \ref{1}). 
   Then $(A/ann (p^{2}), +, \bullet )$ and $(A/ann(p^{2}, +, \cdot)$ are the same pre-Lie rings, so
   \[[x]\bullet [y]=(p-1)([x\cdot y]),\] for $[x], [y]\in A/ann(p^{2})$. 
 \end{lemma}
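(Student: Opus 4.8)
The plan is to follow the strategy of the last section of \cite{shsm}: expand the star operation $*$ of the group-of-flows brace $(A,+,\circ)$ as an explicit series in the pre-Lie product $\cdot$, substitute this series into the defining formula for $\bullet$ quoted in Theorem \ref{1}, and then use a primitive-root summation to isolate the term that is linear in $\cdot$. Throughout I would work in $A/ann(p^{2})$ and use that $\wp^{-1}$ is additive on $pA$ with $[\wp^{-1}(p^{k}z)]=[p^{k-1}z]$, together with $p^{p-1}A=0$ (hence $p^{p}A=0$).

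First I would record the group-of-flows formula (\cite{AG}, \cite{passage}): for the brace obtained from $(A,+,\cdot)$ one has $a*b=\sum_{k\ge 1}M_{k}(a,b)$, where $M_{k}(a,b)$ is the part of $a*b$ homogeneous of degree $k$ in its first argument $a$, with $M_{1}(a,b)=a\cdot b$, and where $M_{k}(a,b)$ is a combination of pre-Lie products of $k+1$ of the arguments. Such a product lies in the $(k+1)$-st term of the left series, which vanishes once $k+1>n$ (by the same size argument that gives $A^{n+1}=0$), so the sum is finite and every nonzero $M_{k}$ has $1\le k\le n-1<p-2$. The homogeneity in the first slot is the essential feature: replacing the first argument by $\xi^{i}pa$ scales the degree-$k$ piece by $(\xi^{i}p)^{k}$, so
\[(\xi^{i}pa)*c=\sum_{k\ge 1}\xi^{ik}p^{k}M_{k}(a,c).\]
Applying $\wp^{-1}$ termwise and passing to $A/ann(p^{2})$ gives
\[[\wp^{-1}((\xi^{i}pa)*c)]=\sum_{k\ge 1}\xi^{ik}[p^{k-1}M_{k}(a,c)],\]
whose $k=1$ contribution is $\xi^{i}[a\cdot c]$.

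Next I substitute this into the formula $[a]\bullet[c]=\sum_{i=0}^{p-2}\xi^{p-1-i}[\wp^{-1}((\xi^{i}pa)*c)]$ (in the form quoted in Theorem \ref{1}, whose leading term is the $(p-1)[a]\odot[c]$ appearing in the proof of Theorem \ref{11}) and interchange the two summations, obtaining
\[[a]\bullet[c]=\sum_{k\ge 1}\xi^{p-1}\Bigl(\sum_{i=0}^{p-2}(\xi^{k-1})^{i}\Bigr)[p^{k-1}M_{k}(a,c)].\]
Here $\xi$ should be chosen as a primitive root modulo $p$ lifted so that $\xi^{p-1}\equiv 1\pmod{p^{p}}$ (the Teichm\"uller-type lift used implicitly earlier, compatibly with $(p-1)(-(1+p+\cdots+p^{p-1}))\equiv 1\pmod{p^{p}}$). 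For $k=1$ the inner sum equals $p-1$, and $\xi^{p-1}\equiv 1\pmod{p^{p}}$ with $p^{p}A=0$ gives the contribution $(p-1)[a\cdot c]$. For $2\le k\le n-1$ the factor $\xi^{k-1}-1$ is a unit modulo $p$ (since $1\le k-1<p-1$ and $\xi$ is primitive mod $p$), while $(\xi^{k-1}-1)\sum_{i=0}^{p-2}(\xi^{k-1})^{i}=(\xi^{p-1})^{k-1}-1\equiv 0\pmod{p^{p}}$; hence the geometric sum itself is divisible by $p^{p}$, and the corresponding term dies in $A/ann(p^{2})$ because $p^{p}A=0$. Only the $k=1$ term survives, giving $[x]\bullet[y]=(p-1)[x\cdot y]$ as claimed.

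The main obstacle is the $p$-adic bookkeeping in this last step: one must verify that the group-of-flows series really has leading term $a\cdot b$ and is homogeneous in the first slot (so the $\xi^{ik}$ weights are legitimate), that $\xi$ can be chosen simultaneously primitive modulo $p$ and with $\xi^{p-1}\equiv 1\pmod{p^{p}}$, and — the genuinely delicate point — that each geometric sum is divisible by $p^{p}$ (and not merely by $p$) for every relevant $k$, which is exactly what forces the higher terms to vanish in $A/ann(p^{2})$. The remaining ingredients (termwise application of $\wp^{-1}$, the identity $[\wp^{-1}(p^{k}z)]=[p^{k-1}z]$, and the vanishing of the higher filtration pieces) are routine given the lemmas already proved, and I would phrase them to match, as closely as possible, the argument in the final section of \cite{shsm}.
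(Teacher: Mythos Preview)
Your proposal is correct and follows essentially the same route as the paper: expand $a*b$ via the group-of-flows formula, use that $\Omega(\xi^i pa)=\xi^i pa+\sum_{k\ge 2}(\xi^i)^k f_k(pa)$ (the paper invokes Lemma~\ref{aga} for this), collect the degree-$k$ pieces weighted by $(\xi^i)^k$, and then kill all $k\ge 2$ contributions by the geometric-sum identity $(\xi^{k-1}-1)\sum_{i=0}^{p-2}(\xi^{k-1})^i=(\xi^{p-1})^{k-1}-1\equiv 0\pmod{p^{p}}$ combined with $\xi^{k-1}\not\equiv 1\pmod p$ from Lemma~\ref{Engelxi}. The only cosmetic difference is that the paper keeps the variable $a'=pa$ inside $t_k(a',b)$ and truncates using $p^{p-1}A=0$, whereas you factor out $p^{k}$ and appeal to a nilpotency bound $k\le n-1$; that bound is not actually guaranteed by the hypothesis ``left nilpotent'' alone, but it is also unnecessary, since your geometric-sum argument already covers every $2\le k\le p-1$ and the terms with $k\ge p$ die because $[p^{k-1}M_k(a,c)]\in [p^{p-1}A]=0$.
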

 \begin{proof} Notice that the addition in both pre-Lie rings is the same as the addition in the additive group $A/ann(p^{2})$ of the factor brace $A/ann(p^{2})$. We need to show that \[[x]\bullet [y]=(p-1)([x\cdot y]),\] for $[x], [y]\in A/ann(p^{2})$. We will use the same notations for the construction of the group of flows as in \cite{passage}. Recall that the formula for the operation $*$ in the group of flows of pre-Lie algebra $(A, +, \cdot )$ is 
 \[a*b= {\Omega (a)}\cdot b+{\frac 1{2!}}{\Omega (a)}\cdot ({\Omega (a)}\cdot b)+ {\frac 1{3 !}}{\Omega (a)}\cdot ({\Omega (a)}\cdot ({\Omega (a)}\cdot b))+\ldots  \]
 where $a\circ b=a*b+a+b$ in brace $(A, +, \circ)$. 
  By Lemma \ref{aga} ($11$ from \cite{passage}) we have that 
$\Omega (a)=a+\sum_{w }\alpha _{w}w(a)$ for some integers $\alpha _{w}$, where $w$ are finite  non associative words in variable $x$ (of degree at least $2$), and $w(a)$ is a specialisation of $w$ at $a$ (so for example if $w=x\cdot (x\cdot x)$ then $w(a)=a\cdot (a\cdot a)$). Moreover, there is $m$ such that $\alpha _{j}=0$ provided that  $j>m$. 

 Let $a,b\in A$, we will denote $a'=pa$. Observe now that  
\[[\xi ^{i}a]\odot [b]=[\wp^{-1}((\xi ^{i}a')*b]=[\wp^{-1}({\Omega (\xi ^{i}a')}\cdot b+{\frac 1{2!}}{\Omega (\xi ^{i}a')}\cdot ({\Omega (\xi ^{i}a')}\cdot b)+\ldots )].\]
 Observe also that 
\[[\Omega (\xi ^{i}a')]=\xi ^{i}a'+\sum_{w }\alpha _{w}w(\xi ^{i}a')=\xi ^{i}a+\sum_{k=2}^{m}(\xi ^{i})^{k}f_{k}(a'),\]
 where \[f_{k}(a')=\sum_{w\in W_{k}}\alpha _{w}w(a')\] where $W_{k}$ consists of words of length $k$.

 Therefore, \[[\xi ^{i}a]\odot [b]=[\wp^{-1}( \xi ^{i}a'+\sum_{k=2}^{mp}(\xi ^{i})^{k}t_{k}(a',b))],\]
where $t_{k}(a',b)$ is a linear combination (with integer coefficients) of some products of $j$ copies of $a'$ and element $b$ at the end under the operation $\cdot $.
 Notice, that  for $j>p-1$ each such product will be zero because it will belong to $p^{p-1}A=0$, because  $a'=pa$ and the operation $\cdot $ is distributive. Therefore,
\[[\xi ^{i}a]\odot [b]=[\wp^{-1}( \xi ^{i}a'\cdot b+\sum_{k=2}^{p-2}(\xi ^{i})^{k}t_{k}(a',b))],\]

  Notice that 
\[\xi ^{p-1-i}[\xi ^{i}a]\odot [b]=[\wp^{-1}( a'\cdot b+\sum_{k=2}^{p-2}(\xi ^{k-1})^{i}t_{k}(a',b))].\]
 
Recall also the formula $[a]\bullet [b]=\sum_{i=0}^{p-2}\xi ^{p-1-i}[\xi ^{i}a]\odot [b]$, where  $\xi ^{p-1}\equiv 1 \mod p^{n}$.
 Combining the above equations together we get 

\[[a]\bullet [b]=[\wp^{-1}( \sum_{i=0}^{p-2} (a'\cdot b+\sum_{k=2}^{p-2}(\xi ^{k-1})^{i}t_{k}(a',b)))].\]
Notice that $(1-\xi ^{i})\sum_{j=1}^{p-2}(\xi ^{i})^{j}=(\xi ^{i})^{p-1}-1 =0$, provided that $0<i<p-1$. 
Consequently \[[a]\bullet [b]=[\wp^{-1}(\sum_{i=0}^{p-1}a'\cdot b)]=[\wp^{-1}((p-1)a'\cdot b    )]=(p-1) [a\cdot b],\]
 since $\cdot $ is a pre-Lie algebra operation, so 
\[a'\cdot b=(pa)\cdot b=p(a\cdot b).\] This concludes the proof.

 \end{proof}
 
 {\bf Remark 2.} Let $(A, +, \circ _{1})$ and $(A, +, \circ _{2})$ be two braces with the same additive group $(A, +)$.
  Suppose that the cardinality of $A$ is $p^{n}$ for some natural number $n$, and some prime number $p>n+1$. 
  Let $(A/ann(p^{2}), +, \bullet _{1})$ and $(A/ann(p^{2}), +, \bullet _{2})$ be the corresponding pre-Lie rings constructed as in \cite{shsm} (see  Theorem \ref{1} for details).
  Note that the set $ann(p^{2})=\{a\in A: p^{2}a=0\}$ is the same in both braces. Therefore the set $A/ann(p^{2})$ can be defined only by using the additive group $(A, +)$ without using operations $\circ _{1}, \circ _{2}$.

 Let $(A/ann (p^{2}), +, \circ _{1})$ be the factor brace obtained from the brace 
  $(A, +, \circ _{1})$ and  let $(A/ann (p^{2}), +, \circ _{2})$ be the factor brace obtained from brace 
  $(A, +, \circ _{2})$. As usual we use the same notation for operations $+, \circ $ in the brace and in the factor brace.

 Observe that by Theorem \ref{789} if $x \bullet_{1} y=x \bullet _{2} y$ for all $x,y\in A/ann(p^{2})$ then 
 $x\circ _{1} y=x\circ _{2}y$, for $x,y\in A/ann(p^{2})$, and hence the braces 
 $(A, +, \circ _{1})$ and $(A/ann (p^{2}), +, \circ _{2})$ are the same. 

$ $

{\bf Proof of Theorem \ref{Main}.}   As usual we  will denote the operations of the addition and of the multiplication  in the factor brace by the same symbols as in the original brace.  Let \[(A, +, \circ )\]  be 
 a brace of cardinality $p^{n}$ for some natural number $n$, and some prime number $p>n+1$.
   Recall that $ann(p^{2})=\{a\in A:p^{2}a=0\}$. 
  Let \[(A/ann (p^{2}), +, \bullet )\] be the pre-Lie ring, constructed as in \cite{shsm} from the brace $(A, +, \circ )$  
  (notice that this construction is quoted in Theorem \ref{1}). 
   We will denote the subset of  elements of order $p^{2}$, in the additive group $(A/ann (p^{2}), +)$, by $I$. 
   Observe that   \[I=\{[a]\in A/ann(p^{2}): [p^{2}a]=[0].\}\]
    Observe that we can define the factor pre-Lie ring $((A/ann (p^{2}))/I, +, \bullet _{1} )$ of the pre-Lie ring $(A/ann (p^{2}), +, \bullet )$ by the ideal $I$. In this pre-Lie ring 
  $((A/ann (p^{2}))/I, +, \bullet _{1} )$ we have  $[x]_{I}\bullet _{1} [y]_{I}=[x\bullet y]_{I}$, $[x]_{I}+[y]_{I}=[x+y]_{I}$ 
  for $x,y\in A/ann(p^{2})$
   (where $[x]_{I}, [y]_{I}$ are elements of this pre-Lie ring $(A/ann (p^{2}))/I$).  
   We will call the pre-Lie ring \[((A/ann (p^{2}))/I, +, \bullet _{1} )\]  the  {\em pre-Lie ring $1$}.

  On the other hand, let \[(A/ann(p^{2}), +, \circ )\]  be the factor brace of the brace $(A, +, \circ )$, by the ideal 
  $ann(p^{2})$. 
  Recall that $I$ denotes the set of elements of the additive order at most $p^{2}$ in the group
  $(A/ann(p^{2}), +)$. 
   Let 
  \[((A/ann(p^{2}))/I, + ,\bullet _{2})\] be the pre-Lie ring constructed  as in \cite{shsm} (so constructed as in  Theorem \ref{1}) from  the brace 
  $(A/ann(p^{2}), +, \circ )$. We will call this pre-Lie ring the {\em pre-Lie ring $2$}. 
 We also consider the factor brace of the brace \[(A/ann(p^{2}), +, \circ )\] by the ideal $I$. We will call this  {\em brace $2$}, and denote it as $((A/ann(p^{2}))/I, +, \circ )$.

  $ $
  
   We will now show that pre-Lie ring $1$ is the same as  pre-Lie ring $2$. Observe that the additive groups of these pre-Lie rings are the same since the construction of the group of flows does not change the additive group.
  Observe that for $x,y\in A$ by $[x], [y] $ we denote elements of the additive group $(A/ann (p^{2}), +)$ and 
   by $[[x]]_{I}, [[y]]_{I}$ we denote elements of the additive group $((A/ann (p^{2}))/I, +)$.
   We need to show that 
   \[[[x]]_{I}\bullet _{1} [y]_{I}=[x]]_{I}\bullet _{2} [[y]]_{I},\] for $x,y\in A/ann(p^{2})$.
    
    Observe that 
   \[[x]_{I}\bullet _{2} [y]_{I}=[[x]\bullet [y] ]_{I}=[[\wp^{-1}((px)\cdot y)]]_{I},\]
   where  for $x,y\in A$ we define $x\cdot y=\sum_{i=0}^{p-2}\xi ^{p+1-i}((\xi ^{i}x)*y).$

    Observe that 
   \[[x]_{I}\bullet _{2} [y]_{I}=  [\wp^{-1}([px] \cdot  [y])]_{I},\]
    where for $x,y\in A$ we define $[x]\cdot [y]=\sum_{i=0}^{p-2}\xi ^{p+1-i}([\xi ^{i}x]*[y])$.
    
    By the definition of $I$, we know that if $z, w\in A/ann(p^{2})$ then $[z]_{I}=[w]_{I}$ if and only if $z-w\in I$,
    which means $p^{2}z=p^{2}w$.
      
      Therefore, to show that $[[x]]_{I}\bullet _{1} [y]_{I}=[[x]]_{I}\bullet _{2} [[y]]_{I}$ it suffices to show that 
    \[p^{2}[\wp^{-1}((px)\cdot y)]=p^{2}\wp^{-1}([px] \cdot  [y]).\]
    This is equivalent to
    \[p[(px)\cdot y]=p([px] \cdot  [y]),\] and this is true by the definition of the factor brace $(A/ann(p^{2}), +, *)$.
    Consequently, operations $\bullet _{1}$ and $\bullet _{2}$ are the same.
    It follows that the pre-Lie ring $1$ is the same as the pre-Lie ring $2$.
    
 We will now introduce  {\em pre-Lie ring $3$}. Consider the pre-Lie ring $(A/ann(p^{2}), +, \bullet _{3})$
  such that the addition in this pre-Lie ring is the same as the addition in the pre-Lie ring $1$, and the multiplication is defined as
 \[[x]\bullet _{3} [y]=-(1+p+p^{2}+\ldots p^{p-1})([x]\bullet _{1} [y]),\] 
 for $x,y\in A$. Notice that this gives  a well defined pre-Lie ring (see \cite{Lazard} for a proof of this remark).
 We will call the pre-Lie ring  $(A/ann(p^{2}), +, \bullet _{3})$  {\em pre-Lie ring $3$}. 

 We will now introduce {\em  brace $1$}. 
  Let $(A/ann (p^{2}), +, \circ _{3})$ be the brace which is constructed as the group of flows from    pre-Lie ring $3$. 
  Let $((A/ann (p^{2}))/I, +, \bullet _{4})$ be the pre-Lie ring constructed from the  brace $(A/ann (p^{2}), +, \circ _{3})$   by the construction from Theorem \ref{1} (so  by the construction from \cite{shsm}). We will call this pre-Lie ring \[((A/ann (p^{2}))/I, +, \bullet _{4})\] the {\em pre-Lie ring $4$}.
 By Lemma \ref{similar} we obtain that the addition in the pre-Lie ring
 $1$ is the same as the addition in the pre-Lie ring $4$. 
 Observe also that by Lemma \ref{similar} we have for $a,b\in A/ann (p^{2})$ 
\[[a]_{I}\bullet _{4} [b]_{I}=(p-1)([a]_{I}\bullet _{3}[b]_{I})=-(1+p+p^{2}+\cdots )(p-1)([a]_{I}\bullet _{1}[b]_{I})=[a]_{I}\bullet _{1} [b]_{I},\] 
since $p^{n}a=a$ for every $a\in A/ann(p^{2})$. Therefore the pre-Lie ring $4$ 
 is the same as the pre-Lie ring $1$.

Let {\em brace $1$} be the factor brace of the brace $(A/ann (p^{2}), +, \circ _{3})$ by the ideal $I$. Observe that brace $1$ is the group of flows of  pre-Lie ring $5$ which is the factor ring of  pre-Lie ring $3$ by the ideal $I$ (because the brace $(A/ann (p^{2}), +, \circ _{3})$
 is the group of flows of  pre-Lie ring $3$).

    Recall that  pre-Lie ring $1$ is the same as  pre-Lie ring $2$. Moreover, pre-Lie ring $4$ is the same as  pre-Lie ring $1$. Therefore  pre-Lie ring $4$ is the same as  pre-Lie ring $2$. 

 By using Theorem \ref{789} we obtain  that 
      brace $1$ and brace $2$ are the same, since it is possible to recover brace $1$ from pre-Lie ring $4$, and brace $2$ from pre-Lie ring $2$ by the formula from Theorem \ref{789}. 
     
     Therefore the brace $2$ is the group of flows, since brace $1$ is the group of flows  of a left nilpotent  pre-Lie algebra.

    It remains to show that  brace $2$ is the same as the factor brace of the brace $(A, +, \circ )$  by the ideal $ann(p^{4})$.  
    Notice that we can map $[[a]]_{I}\rightarrow [a]_{ann(p^{4})}$ for $a\in A$. Notice that this map is well defined because 
    if $[[a]]_{I}=[[b]]_{I}$ that means that $[a]-[b]$ is in the $I$, so $[p^{2}a]=[p^{2}b]$. This in turn means that $p^{2}(p^{2}a)=p^{2}(p^{2}b)$, which means $[a]_{ann (p^{4})}=[b]_{ann(p^{4})}$. 
    Notice that this map is a homomorphism of braces since 
    $[[a]]_{I}*[[b]]_{I}=[[a*b]]_{I}\rightarrow [a*b]_{ann(p^{4})}=[a]_{ann(p^{4})}*[b]_{ann(p^{4})}$. 
    This shows that  brace $2$ is the same as  the factor brace  $(A/ann(p^{4}), +, \circ )$.

\section{ Some results from other papers which were used in previous sections}

For the convenience of the reader we recall  some results  from other papers used in previous sections.
All of these results are also listed in \cite{shsm}. 

By a result of Rump \cite{rump}, for a prime number $p$,  every brace of order $p^{n}$ is left nilpotent.
 Recall that,  Rump introduced {\em left nilpotent}  and  {\em right nilpotent}  braces and radical chains
  $A^{i+1}=A*A^{i}$ and $A^{(i+1)}=A^{(i)}*A$  for a left brace $A$, where  $A=A^{1}=A^{(1)}$. A left brace $A$  is left nilpotent if  there is a number $n$ such that $A^{n}=0$, where inductively $A^{i}$ consists of sums of elements $a*b$ with
$a\in A, b\in A^{i-1}$. A left brace $A$ is right nilpotent   if  there is a number $n$ such that $A^{(n)}=0$, where $A^{(i)}$ consists of sums of elements $a*b$ with
$a\in A^{(i-1)}, b\in A$.
We recall  Lemma 15 from \cite{Engel}:
\begin{lemma}\label{citeEngel}\cite{Engel}
 Let $s$ be a natural number and let $(A, +, \circ)$ be a left brace such that $A^{s}=0$ for some $s$.
 Let $a, b\in A$, and as usual define $a*b=a\circ b-a-b$.
Define inductively elements $d_{i}=d_{i}(a,b), d_{i}'=d_{i}'(a, b)$  as follows:
$d_{0}=a$, $d_{0}'=b$, and for $1\leq i$ define $d_{i+1}=d_{i}+d_{i}'$ and $d_{i+1}'=d_{i}*d_{i}'$.
 Then for every $c\in A$ we have
\[(a+b)*c=a*c+b*c+\sum _{i=0}^{2s} (-1)^{i+1}((d_{i}*d_{i}')*c-d_{i}*(d_{i}'*c)).\]
\end{lemma}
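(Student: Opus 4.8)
The plan is to reduce the statement to a single standard brace identity together with an alternating telescoping sum, invoking the hypothesis $A^{s}=0$ only at the very end to annihilate the boundary terms. Throughout I abbreviate $u_{i}=d_{i}*c$, $v_{i}=d_{i}'*c$ and $w_{i}=d_{i}*(d_{i}'*c)$, and I record the reformulation I aim at: since $d_{1}=a+b$ while $d_{0}=a$, $d_{0}'=b$, the asserted equality is exactly
\[u_{1}=u_{0}+v_{0}+\sum_{i=0}^{2s}(-1)^{i+1}T_{i},\qquad T_{i}=(d_{i}*d_{i}')*c-d_{i}*(d_{i}'*c).\]
Because $d_{i+1}'=d_{i}*d_{i}'$, one has $(d_{i}*d_{i}')*c=v_{i+1}$, so $T_{i}=v_{i+1}-w_{i}$; this is the form I will sum.

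First I would establish the fundamental left-brace identity
\[(x\circ y)*c=x*(y*c)+x*c+y*c,\]
which is a standard consequence of the left-brace axioms (it is the $*$-form of the multiplicativity $\lambda_{x\circ y}=\lambda_{x}\lambda_{y}$ together with additivity of each $\lambda_{x}$). Rewriting $x\circ y=x+y+x*y$ turns it into $(x+y+x*y)*c=x*c+y*c+x*(y*c)$. Specialising at $x=d_{i}$, $y=d_{i}'$ and using the defining recursion $d_{i+1}=d_{i}+d_{i}'$, $d_{i+1}'=d_{i}*d_{i}'$ (so that $d_{i}+d_{i}'+d_{i}*d_{i}'=d_{i+1}+d_{i+1}'=d_{i+2}$) yields the clean two-step recursion
\[u_{i+2}=u_{i}+v_{i}+w_{i},\qquad\text{equivalently}\qquad w_{i}=u_{i+2}-u_{i}-v_{i}.\]

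Next I would substitute $w_{i}=u_{i+2}-u_{i}-v_{i}$ into $T_{i}=v_{i+1}-w_{i}=v_{i+1}+v_{i}+u_{i}-u_{i+2}$ and evaluate $\sum_{i=0}^{2s}(-1)^{i+1}T_{i}$. The $v$-part $\sum_{i=0}^{2s}(-1)^{i+1}(v_{i+1}+v_{i})$ collapses to $-v_{0}+(-1)^{2s+1}v_{2s+1}$, while the $u$-part $\sum_{i=0}^{2s}(-1)^{i+1}(u_{i}-u_{i+2})$ telescopes in steps of two to $u_{1}-u_{0}+(-1)^{2s}(u_{2s+2}-u_{2s+1})$. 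Hence the whole problem reduces to checking that the two boundary contributions $v_{2s+1}$ and $u_{2s+2}-u_{2s+1}$ vanish.

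This last point is where $A^{s}=0$ enters, and it is the step I expect to require the most care. An easy induction gives $d_{i}'\in A^{i+1}$ (base $d_{0}'=b\in A$, step $d_{i+1}'=d_{i}*d_{i}'\in A*A^{i+1}\subseteq A^{i+2}$), so $d_{i}'=0$ once $i\ge s-1$; consequently $v_{i}=0$, $w_{i}=0$ and, since $d_{i+1}=d_{i}+d_{i}'=d_{i}$, also $u_{i+1}=u_{i}$ for all such $i$. With the upper limit chosen as $2s$ these indices lie comfortably in the stable range: $v_{2s+1}=0$ and $u_{2s+2}=u_{2s+1}$, so both boundary terms drop out and the sum equals $u_{1}-u_{0}-v_{0}$, which is the assertion. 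The genuine mathematical input is thus only the one brace identity; the likely pitfall is purely clerical, namely getting the signs and index shifts in the two telescopes exactly right and confirming that $2s$ (rather than some smaller bound) is precisely what places every surviving boundary term in the stabilised range.
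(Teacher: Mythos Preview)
Your argument is correct. The key brace identity $(x\circ y)*c = x*(y*c)+x*c+y*c$ specialises, via $d_{i+1}+d_{i+1}'=d_{i+2}$, to the two-step recursion $u_{i+2}=u_{i}+v_{i}+w_{i}$, and your two telescoping computations check out: the $v$-sum collapses to $-v_{0}-v_{2s+1}$ and the $u$-sum to $u_{1}-u_{0}+(u_{2s+2}-u_{2s+1})$. The induction $d_{i}'\in A^{i+1}$ then kills the boundary terms in the required range.

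There is nothing to compare with in the present paper: the lemma is quoted from \cite{Engel} (Lemma~15 there) without proof, as part of a list of results recalled for the reader's convenience. Your self-contained argument is precisely the one the original reference uses --- the same specialisation of the identity $\lambda_{x\circ y}=\lambda_{x}\lambda_{y}$ to produce the recursion, followed by the alternating telescope and the nilpotency bound $d_{i}'\in A^{i+1}\subseteq A^{s}=0$ for $i\ge s-1$.
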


 Let $A$ be a brace, let $a\in A$ and let $n$ be a natural number. Let $a^{\circ n}=a\circ \cdots \circ a$ denote the product of $n$ copies of $a$ under the operation $\circ $.
 We recall Lemma 14 from \cite{note} (Lemma 15 in the arXiv version)
\begin{lemma}\label{citenote}\cite{note}
  Let $A$ be a left brace, let $a, b\in A$ and let $n$ be a positive integer. Then,
  \[a^{\circ n} *b =  \sum_{i=1}^{n}{n\choose i}e_{i},\] where
  $e_{1} = a*b$, $e_{2} = a*e_{1}$,
 and for each $i$,  $e_{i+1} =a*e_{i}$.
   Moreover, \[a^{\circ n} = \sum_{i=1}^{n}{n\choose i}a_{i},\] where
  $a_{1} = a$, $a_{2} = a*a_{1}$,
 and for each $i$,  $a_{i+1} =a*a_{i}$.
\end{lemma}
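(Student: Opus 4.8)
The plan is to prove both identities by induction on $n$, relying only on the two structural facts about a left brace that the paper already uses: the left-distributivity $a*(x+y)=a*x+a*y$, so that the map $L_a\colon x\mapsto a*x$ is an additive endomorphism of $(A,+)$; and the multiplicativity of the $\lambda$-maps, $\lambda_{x\circ y}=\lambda_x\lambda_y$, which together with $x\circ y=x*y+x+y$ yields the $*$-form identity $(x\circ y)*b = x*(y*b)+x*b+y*b$. The base case $n=1$ is immediate in both displays, since $a^{\circ 1}*b=a*b=e_1$ and $a^{\circ 1}=a=a_1$.

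For the first identity I would write $a^{\circ(n+1)}=a\circ a^{\circ n}$ and apply the $*$-form identity with $x=a$ and $y=a^{\circ n}$, obtaining
\[
a^{\circ(n+1)}*b = a*(a^{\circ n}*b) + a*b + a^{\circ n}*b .
\]
Substituting the inductive hypothesis $a^{\circ n}*b=\sum_{i=1}^{n}\binom{n}{i}e_i$ and pushing the outer factor $a*(-)$ through the sum by left-distributivity, each $a*e_i$ becomes $e_{i+1}$, so the right-hand side is $\sum_{i=1}^{n}\binom{n}{i}e_{i+1}+e_1+\sum_{i=1}^{n}\binom{n}{i}e_i$. Reindexing the first sum and collecting the coefficient of each $e_k$ reduces the identity to Pascal's rule $\binom{n}{k-1}+\binom{n}{k}=\binom{n+1}{k}$, with the two boundary terms $k=1$ and $k=n+1$ matching $\binom{n+1}{1}$ and $\binom{n+1}{n+1}$. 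The second identity proceeds in exactly the same fashion from $a^{\circ(n+1)}=a\circ a^{\circ n}=a+a^{\circ n}+a*a^{\circ n}$, where now $a*a_i=a_{i+1}$ and the same Pascal-rule bookkeeping closes the induction.

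A slicker alternative, which I would at least mention, is operator-theoretic. Since $\lambda_a=\mathrm{id}+L_a$ and $\lambda$ is a homomorphism $(A,\circ)\to\mathrm{Aut}(A,+)$, we have $\lambda_{a^{\circ n}}=\lambda_a^{\,n}=(\mathrm{id}+L_a)^n=\sum_{i=0}^{n}\binom{n}{i}L_a^{\,i}$ by the binomial theorem, the expansion being valid because $\mathrm{id}$ and $L_a$ commute as additive operators. As $L_a^{\,i}(b)=e_i$ and $a^{\circ n}*b=\lambda_{a^{\circ n}}(b)-b$, the first formula drops out in one line; the second follows from the telescoping identity $a^{\circ n}=\sum_{k=0}^{n-1}\lambda_a^{\,k}(a)$ combined with the hockey-stick identity $\sum_{k=j}^{n-1}\binom{k}{j}=\binom{n}{j+1}$ and $L_a^{\,j}(a)=a_{j+1}$.

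I do not expect a genuine obstacle here: this is essentially a bookkeeping result. The only points requiring care are the handling of the boundary terms in the coefficient collection (ensuring the extreme indices $k=1$ and $k=n+1$ come out correctly rather than being absorbed into the Pascal step), and, in the operator approach, justifying the binomial expansion via commutativity of $\mathrm{id}$ and $L_a$ and correctly identifying the iterated left multiplications $L_a^{\,i}$ with the recursively defined $e_i$ and $a_i$.
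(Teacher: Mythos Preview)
Your proof is correct. Both the inductive argument and the operator-theoretic alternative are valid; the key identity $(x\circ y)*b = x*(y*b)+x*b+y*b$ follows exactly as you say from $\lambda_{x\circ y}=\lambda_x\lambda_y$ and left-distributivity, and the Pascal bookkeeping is handled properly, including the boundary coefficients. Note, however, that the paper does not give its own proof of this lemma: it is quoted in the final section as a result from \cite{note} (Lemma~14 there), so there is no in-paper proof to compare against. Your inductive approach is the standard one and is essentially what appears in the original reference.
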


Let $A$ be a brace, by $A^{\circ p^{i}}$ we  denote the subgroup of $(A, \circ )$ generated by the elements $a^{\circ p^{i}}$, where $a\in A$.

We recall Proposition $15$ from \cite{passage}:

\begin{lemma}\label{citepassage}\cite{passage}
 Let $i,n$ be natural numbers.
 Let $A$ be a brace of cardinality $p^{n}$ for some prime number $p>n+1$.
  Then,  $p^{i}A=\{p^{i}a:a\in A\}$ is an   ideal in $A$ for each $i$.
 Moreover \[A^{\circ p^{i}}=p^{i}A.\]
\end{lemma}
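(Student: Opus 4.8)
The plan is to first compute the leading term of each $p^{i}$-th power. By Lemma~\ref{citenote} with $m=p^{i}$ and summation index $k$, and using $A^{n+1}=0$ (so $a_{k}=0$ for $k>n$), one has $a^{\circ p^{i}}=\sum_{k=1}^{n}\binom{p^{i}}{k}a_{k}$ with $a_{k}\in A^{k}$. For $1\le k\le n<p$ the factor $k!$ is prime to $p$, and a short $p$-adic valuation count gives $v_{p}\!\binom{p^{i}}{k}=i$, with $\binom{p^{i}}{1}=p^{i}$. Hence $a^{\circ p^{i}}=p^{i}f_{i}(a)$, where $f_{i}(a)=a+\sum_{k=2}^{n}\big(\binom{p^{i}}{k}/p^{i}\big)a_{k}$ and the correction lies in $A^{2}$; in particular each generator $a^{\circ p^{i}}$ of $A^{\circ p^{i}}$ lies in $p^{i}A$. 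Since $\lambda_{x}$ is additive we have $x*(p^{i}y)=p^{i}(x*y)$, so for $x=p^{i}a,\ y=p^{i}b$ we get $x\circ y=p^{i}\big(a+b+(p^{i}a)*b\big)\in p^{i}A$; as $p^{i}A$ is finite and contains $0$, it is a subgroup of $(A,\circ)$, and therefore $A^{\circ p^{i}}\subseteq p^{i}A$.

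For the reverse inclusion I would use that $(A,\circ)$ is a regular $p$-group: its nilpotency class is at most $n$, which is $<p$ by hypothesis, so Hall's criterion applies. In a regular $p$-group the set of $p^{i}$-th powers is already the verbal subgroup $\mho_{i}=A^{\circ p^{i}}$, and these operators compose, $\mho_{i+1}=\mho_{i}(\mho_{1})$. This lets me reduce to the case $i=1$ and then induct on $i$ (the statement being quantified over all braces of $p$-power order satisfying the hypothesis): granting the $i=1$ case (so that $pA=A^{\circ p}$ is an ideal, hence itself a brace) and the Lemma for $pA$ at exponent $i$, we obtain $A^{\circ p^{i+1}}=\mho_{i}(\mho_{1}(A))=(pA)^{\circ p^{i}}=p^{i}(pA)=p^{i+1}A$. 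Everything thus reduces to the base case $pA\subseteq A^{\circ p}$, i.e.\ to the $p$-th power map surjecting onto $pA$. By the identity $a^{\circ p}=pf_{1}(a)$ this is equivalent to $pf_{1}(A)=pA$, and by regularity (with $|\mho_{1}|=|A|/|\{x:x^{\circ p}=0\}|$) equivalent to the order identity $|\{x:x^{\circ p}=0\}|=|ann(p)|$.

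Once $A^{\circ p^{i}}=p^{i}A$ is in hand, the ideal property is immediate: $p^{i}A$ is plainly an additive subgroup; it is $\lambda$-invariant because $\lambda_{a}(p^{i}b)=p^{i}\lambda_{a}(b)\in p^{i}A$; and as the verbal subgroup $A^{\circ p^{i}}$ it is characteristic, hence normal, in $(A,\circ)$. A normal $\circ$-subgroup that is additively closed and $\lambda$-invariant is an ideal, so $p^{i}A$ is an ideal of $A$. I expect the genuine difficulty to be concentrated entirely in the base case $pA\subseteq A^{\circ p}$, equivalently the surjectivity $pf_{1}(A)=pA$. A naive filtration induction to show $f_{1}$ bijective stalls on the term $(a-b)*b$, since $A^{m}*A\subseteq A^{m+1}$ can fail in a merely left nilpotent brace; this is precisely where $p>n+1$ must be used in earnest, via the regular-$p$-group count that forces $|\{x:x^{\circ p}=0\}|=|ann(p)|$ and hence $pf_{1}(A)=pA$.
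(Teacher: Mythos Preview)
This lemma is recalled from \cite{passage} without proof in the present paper, so there is no in--paper argument to compare against; I can only assess your attempt on its own merits.

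The parts you do carry out are sound. The computation $a^{\circ p^{i}}=p^{i}f_{i}(a)$ via Lemma~\ref{citenote} and the valuation $v_{p}\binom{p^{i}}{k}=i$ for $1\le k<p$ is correct; your argument that $p^{i}A$ is $\circ$--closed (hence a finite $\circ$--subgroup) and therefore contains $A^{\circ p^{i}}$ is clean; the appeal to P.~Hall's criterion to get regularity of $(A,\circ)$ (class $\le n-1<p$) and the reduction to $i=1$ via $\mho_{i+1}=\mho_{i}(\mho_{1})$ are both legitimate; and the deduction of the ideal property from the equality $A^{\circ p^{i}}=p^{i}A$ is correct.

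The gap is exactly where you place it, but your proposed fix does not close it. Regularity gives $|A^{\circ p}|=|A|/|\Omega_{1}(A,\circ)|$, while additively $|pA|=|A|/|ann(p)|$. Thus the identity $|\{x:x^{\circ p}=0\}|=|ann(p)|$ is \emph{equivalent} to $|A^{\circ p}|=|pA|$, which is precisely the statement you still need; regularity constrains only the $\circ$--structure and supplies no link to $ann(p)$, so the ``regular--$p$--group count'' cannot force this equality on its own. Something genuinely connecting the two group structures is required --- for instance a direct proof that $f_{1}$ (or $p\cdot f_{1}$) is surjective. A template for such an argument appears in this very paper: Theorem~\ref{f(a)} proves the closely related injectivity of $[a]\mapsto[f(a)]$ on $A/ann(p^{2})$ by a downward induction on the length $k$ of iterated $*$--products $x_{1}*(x_{2}*\cdots*x_{k})$ with $x_{i}\in\{a,b\}$, using $a^{\circ p}-b^{\circ p}\in ann(p)$ together with Lemma~\ref{citenote} to peel off one factor at a time. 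That style of argument --- rather than an appeal to regularity --- is what is missing from your sketch.
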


$ $

 We also recall Lemma $4$ from \cite{Engel}:
 \begin{lemma}\label{Engelxi}\cite{Engel}
  Let $p>2$ be a prime number.
Let $\xi=\gamma ^{p^{p-1}}$ where $\gamma $ is a primitive root modulo $p^{p}$, then $\xi ^{p-1}\equiv 1 \mod p^{p}$.  Moreover,  $ \xi ^{j}$ is not congruent to $1$ modulo $p$ for natural $0<j<p-1$.
\end{lemma}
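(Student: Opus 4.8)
The plan is to translate the statement into the language of the cyclic unit group $(\mathbb{Z}/p^{p}\mathbb{Z})^{\times}$. Since $p$ is an odd prime, this group is cyclic of order $\varphi(p^{p})=p^{p-1}(p-1)$, and the hypothesis that $\gamma$ is a primitive root modulo $p^{p}$ means precisely that $\gamma$ generates it, i.e.\ $\gamma$ has multiplicative order exactly $p^{p-1}(p-1)$. Everything then reduces to two order computations: one carried out modulo $p^{p}$, which yields the first claim, and one carried out modulo $p$, which yields the second.

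For the first assertion I would simply compute the order of $\xi=\gamma^{p^{p-1}}$. In a cyclic group the order of $\gamma^{k}$ equals $\operatorname{ord}(\gamma)/\gcd(k,\operatorname{ord}(\gamma))$; here $k=p^{p-1}$ divides $\operatorname{ord}(\gamma)=p^{p-1}(p-1)$, so $\gcd(p^{p-1},p^{p-1}(p-1))=p^{p-1}$ and the order of $\xi$ is $p^{p-1}(p-1)/p^{p-1}=p-1$. In particular $\xi^{p-1}\equiv 1 \pmod{p^{p}}$, which is exactly the first part of the lemma.

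For the second assertion the subtlety is that knowing the order of $\xi$ modulo the full modulus $p^{p}$ only gives $\xi^{j}\not\equiv 1\pmod{p^{p}}$ for $0<j<p-1$, whereas we want the stronger noncongruence modulo $p$. To obtain it I would push $\xi$ through the reduction homomorphism $\pi\colon(\mathbb{Z}/p^{p}\mathbb{Z})^{\times}\to(\mathbb{Z}/p\mathbb{Z})^{\times}$. This map is a surjection of cyclic groups, so it carries the generator $\gamma$ to a generator of $(\mathbb{Z}/p\mathbb{Z})^{\times}$; that is, $\bar\gamma=\pi(\gamma)$ is a primitive root modulo $p$ and has order $p-1$. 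I would then evaluate $\pi(\xi)=\bar\gamma^{\,p^{p-1}}$ by reducing the exponent modulo $p-1$: since $p\equiv 1\pmod{p-1}$ we have $p^{p-1}\equiv 1\pmod{p-1}$, and hence by Fermat's little theorem $\bar\gamma^{\,p^{p-1}}\equiv\bar\gamma\pmod{p}$. Thus $\pi(\xi)=\bar\gamma$ still has order $p-1$ modulo $p$, which gives $\xi^{j}\not\equiv 1\pmod{p}$ for every $0<j<p-1$, as required.

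There is no real obstacle in this argument; it is entirely elementary, and I expect the write-up to be short. The only two points that deserve a word of care are the standard fact that a generator of $(\mathbb{Z}/p^{p}\mathbb{Z})^{\times}$ reduces to a generator of $(\mathbb{Z}/p\mathbb{Z})^{\times}$ (which follows because a surjective group homomorphism sends generators to generators, together with the surjectivity of $\pi$ on unit groups), and the exponent reduction $p^{p-1}\equiv 1\pmod{p-1}$ that collapses $\pi(\xi)$ back to $\bar\gamma$. Both are one-line verifications.
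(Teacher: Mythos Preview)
Your argument is correct and complete: the order computation in the cyclic group $(\mathbb{Z}/p^{p}\mathbb{Z})^{\times}$ handles the first claim, and pushing through the reduction map together with the exponent congruence $p^{p-1}\equiv 1\pmod{p-1}$ cleanly yields the stronger second claim modulo $p$. The paper itself does not prove this lemma---it is quoted from \cite{Engel} in the section recalling results from other papers---so there is no proof in the present paper to compare against.
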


 Let $A$ be a brace of cardinality $p^{n}$ where $p$ is a prime number larger than $n+1$. Denote $pA=\{pa:a\in A\}$ where $pa$ is the sum of $p$ copies of $a$.
 We now recal several results from \cite{shsm}:
\begin{proposition}\label{b}\cite{shsm} Let $n$ be a natural number. 
Let $A$ be a  brace of cardinality $p^{n}$ where $p$ is a prime number larger than $n+1$. Then $pA$ is a brace, and the product
of any $p-1$ elements of $pA$ is zero.
Therefore, $pA$ is a strongly nilpotent brace of nilpotency index not exceeding $p-1$.
Moreover, every product of any $i$ elements from the brace $pA$ and any number of elements from $A$ belongs to $p^{i}A$.
Hence the product of any $p-1$ elements from the brace $pA$ and any number of elements from the brace $A$ is zero.
\end{proposition}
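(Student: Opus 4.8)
The plan is to deduce every assertion of the proposition from a single multiplicative estimate, namely that $(p^{i}A)*(p^{j}A)\subseteq p^{i+j}A$ for all $i,j\geq 0$, and then to propagate this estimate through arbitrary non-associative $*$-products by a splitting induction. Throughout I would use Lemma \ref{citepassage} (so that $p^{i}A=A^{\circ p^{i}}$ is an ideal of $A$), Lemma \ref{citenote}, and Rump's left nilpotency, which gives $A^{p-1}\subseteq A^{n+1}=0$ since $n+1\leq p-1$. First I would record the easy part: by Lemma \ref{citepassage} the set $pA=A^{\circ p}$ is an ideal of $A$, hence a sub-brace, and in particular a brace in its own right.

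The computational heart is the estimate $a^{\circ p^{i}}*b\in p^{i+j}A$ whenever $b\in p^{j}A$. To prove it I would expand, using Lemma \ref{citenote}, $a^{\circ p^{i}}*b=\sum_{k\geq 1}\binom{p^{i}}{k}e_{k}$ with $e_{1}=a*b$ and $e_{k+1}=a*e_{k}$. Since $e_{k}\in A^{k+1}$ and $A^{p-1}=0$, only terms with $k<p$ survive; for those $k$ one checks by a direct $p$-adic valuation (or Kummer's theorem, using $v_p(k)=0$) that $\binom{p^{i}}{k}$ is divisible by $p^{i}$. Moreover, if $b\in p^{j}A$ then each $e_{k}$ lies in $p^{j}A$, because $p^{j}A$ is an ideal and $e_{k}=a*e_{k-1}$. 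Hence every surviving summand lies in $p^{i}\cdot p^{j}A=p^{i+j}A$, which proves the estimate for $\circ$-powers. Next I would upgrade it to arbitrary $x\in p^{i}A$: writing $x=a_{1}^{\circ p^{i}}\circ\cdots\circ a_{m}^{\circ p^{i}}$ and using $\lambda_{u\circ v}=\lambda_{u}\lambda_{v}$ together with the additivity of each $\lambda$-map, I would show by induction on $m$ that $\lambda_{x}(b)\in b+p^{i+j}A$, i.e. $x*b=\lambda_{x}(b)-b\in p^{i+j}A$. In the inductive step $\lambda_{x}(b)=\lambda_{a_{1}^{\circ p^{i}}}(b+z)$ with $z\in p^{i+j}A$ by induction, and additivity splits this into $\lambda_{a_{1}^{\circ p^{i}}}(b)\in b+p^{i+j}A$ and $\lambda_{a_{1}^{\circ p^{i}}}(z)=z+a_{1}^{\circ p^{i}}*z$, where the $\circ$-power estimate applied to $z\in p^{i+j}A$ forces $a_{1}^{\circ p^{i}}*z\in p^{i+j}A$. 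This yields the Key Lemma $(p^{i}A)*(p^{j}A)\subseteq p^{i+j}A$ for all $i,j\geq 0$, the case $j=0$ reading as the ideal inclusion $(p^{i}A)*A\subseteq p^{i}A$.

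With the Key Lemma in hand the remaining assertions are routine inductions. For the "moreover" clause I would argue by induction on the total number of factors that any non-associative $*$-product in which exactly $i$ factors come from $pA$ lies in $p^{i}A$: splitting such a product as $u*v$, where $u$ and $v$ carry $i_{1}$ and $i_{2}=i-i_{1}$ factors from $pA$, the inductive hypothesis gives $u\in p^{i_{1}}A$ and $v\in p^{i_{2}}A$, and the Key Lemma (or the ideal property when $i_{1}$ or $i_{2}$ equals $0$) gives $u*v\in p^{i}A$. Taking $i=p-1$ and using $p^{p-1}A\subseteq p^{n}A=0$ (valid since $n\leq p-2$) shows that the product of any $p-1$ elements of $pA$ is zero, whence $pA$ is strongly nilpotent of index at most $p-1$, and likewise every product of $p-1$ elements of $pA$ with any number of elements of $A$ vanishes.

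The step I expect to be the main obstacle is the passage from $\circ$-powers to general elements of $p^{i}A$ in the second paragraph: one must organise the telescoping of the $\lambda$-maps so that the accumulated error term stays in $p^{i+j}A$ at every stage, and this is precisely where the self-improving nature of the $\circ$-power estimate is used, since it allows the right-hand argument $z$ to lie in an arbitrarily high power of $p$. The binomial-valuation bookkeeping in the first step is the other place demanding care, although it is elementary.
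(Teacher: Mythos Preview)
The paper does not actually prove Proposition~\ref{b}: it is quoted verbatim from \cite{shsm} in the final section ``Some results from other papers which were used in previous sections'' and no proof is supplied here. So there is no proof in this paper to compare against.

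That said, your proposal is sound and self-contained. The key estimate $(p^{i}A)*(p^{j}A)\subseteq p^{i+j}A$ is established correctly: the expansion of $a^{\circ p^{i}}*b$ via Lemma~\ref{citenote} truncates at $k\leq p-2$ because $e_{k}\in A^{k+1}$ and $A^{p-1}\subseteq A^{n+1}=0$, the valuation $v_{p}\binom{p^{i}}{k}=i$ for $1\leq k<p$ is standard, and the passage from $\circ$-powers to arbitrary elements of $p^{i}A$ via the multiplicativity and additivity of the $\lambda$-maps is exactly the right mechanism. Your inductive telescoping of $\lambda_{a_{1}^{\circ p^{i}}}(b+z)$ with $z\in p^{i+j}A$ is clean; the only detail worth making explicit is that every element of $A^{\circ p^{i}}$ is a $\circ$-product of genuine $p^{i}$-th powers (inverses cause no trouble since $(a^{\circ p^{i}})^{-1}=(a^{-1})^{\circ p^{i}}$). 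The splitting induction for arbitrary non-associative products and the final use of $p^{p-1}A\subseteq p^{n}A=0$ are routine.

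It is worth noting that the ingredients you use---writing $pa$ as a $\circ$-product of $p$-th powers, expanding $a^{\circ p}*b$ with Lemma~\ref{citenote}, and iterating $\lambda$-maps---are precisely the tools the present paper deploys in the proof of Lemma~\ref{2}, so your argument is very much in the spirit of the surrounding material even though the proposition itself is imported.
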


\begin{lemma}\label{co}\cite{shsm}
  Fix a prime number $p$. Let $W$ be as above. Then there are integers $\beta _{w}$ such that only a finite number of them is non zero and that the following holds:
 For each brace $(A, +, \circ )$  of cardinality $p^{n}$ with  $n<p-1$ and for each $a, b\in pA$, $c\in A$
 we have
\[(a+b)*c=a*c+b*c+ a*(b*c)-(a*b)*c+\sum_{w\in W}\beta _{w}w\langle a, b,   c\rangle ,\]  where  $w\langle a, b,  c\rangle $ is a specialisation of the word $w$ for $X=a$, $Y=b$, $Z=c$ and the multiplication in $w\langle a,b\rangle $ is the same as the  operation $*$ in the brace $A$ (recall that $a*b=a \circ b-a-b$). So for example if $w=(((XX)X)Y)Z$ then $w\langle a,b, c\rangle =(((a*a)*a)*b)*c$.
\end{lemma}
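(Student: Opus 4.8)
The plan is to derive the identity from the right‑distributivity formula of Lemma \ref{citeEngel}, using the nilpotency of $pA$ supplied by Proposition \ref{b} both to terminate the expansion and to force the coefficients to be universal. First I would apply Lemma \ref{citeEngel} with the uniform bound $s=p-1$, which is legitimate for every brace in the statement: since $A^{n+1}=0$ and $n+1\le p-1$, we have $A^{p-1}=0$. Writing $d_{0}=a$, $d_{0}'=b$, $d_{i+1}=d_{i}+d_{i}'$, $d_{i+1}'=d_{i}*d_{i}'$, the formula reads
\[(a+b)*c=a*c+b*c+\sum_{i=0}^{2(p-1)}(-1)^{i+1}\bigl((d_{i}*d_{i}')*c-d_{i}*(d_{i}'*c)\bigr).\]
The $i=0$ summand is exactly $-\bigl((a*b)*c-a*(b*c)\bigr)=a*(b*c)-(a*b)*c$, which accounts verbatim for the two explicit terms in the claimed formula; everything arising from $i\ge 1$ must then be shown to collapse into $\sum_{w\in W}\beta_{w}\,w\langle a,b,c\rangle$.

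For this I would introduce the \emph{$pA$-weight} of a $*$-monomial in $a,b,c$, namely the number of factors equal to $a$ or $b$ (the single factor equal to $c$ not being counted). By Proposition \ref{b} any such monomial of weight at least $p-1$ vanishes, uniformly across all the braces considered. The recursions defining $d_{i},d_{i}'$ have universal integer coefficients, so each $d_{i},d_{i}'$ is a universal $\mathbb{Z}$-combination of $*$-monomials in $a,b$, and for $i\ge 1$ every monomial occurring in the $i$-th summand has weight at least $3$. To rewrite $(d_{i}*d_{i}')*c$ and $d_{i}*(d_{i}'*c)$ as genuine combinations of monomials I would expand from the inside out: wherever a sum sits in the second argument of a $*$ I use left-distributivity $x*(y_{1}+y_{2})=x*y_{1}+x*y_{2}$, which is exact and introduces no correction, and wherever a sum sits in the first argument of a $*$ I re-apply Lemma \ref{citeEngel} to split it, which introduces only correction monomials of strictly larger $pA$-weight (their $c$ still occurring exactly once at the end).

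Termination and uniformity then come together through the weight filtration: each re-application of Lemma \ref{citeEngel} strictly raises the weight, and the weight is capped at $p-2$ by Proposition \ref{b}, so the inner expansion stops after at most $p-2$ rounds, while the range of $i$ may likewise be truncated uniformly once $d_{i}'$ reaches weight $p-1$. Collecting terms, $(a+b)*c-a*c-b*c-a*(b*c)+(a*b)*c$ becomes a finite integer combination of $*$-monomials of weight $\ge 3$ in $a,b,c$, i.e.\ of words $w\in W$, and since every coefficient produced ($\pm 1$ from the signs together with the universal integers from the two recursions) depends only on $p$ and on $w$, the $\beta_{w}$ are independent of the particular brace and of $a,b,c$. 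The main obstacle is precisely the handling of sums sitting in the \emph{first} argument of $*$: because the failure of right-distributivity is exactly the quantity being computed, this expansion is a priori circular, and the entire argument hinges on the bookkeeping that every such correction strictly increases $pA$-weight while Proposition \ref{b} annihilates all sufficiently high weights, which at one stroke secures termination, finiteness of $W$, and brace-independence of the $\beta_{w}$.
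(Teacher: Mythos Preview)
The paper does not contain a proof of Lemma~\ref{co}: it appears in the final section among results quoted from \cite{shsm} and is not re-proved here, so there is no in-paper argument to compare your proposal against.

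That said, your approach is sound and is almost certainly the one in \cite{shsm}, since Lemma~\ref{citeEngel} is the only right-distributivity identity available and Proposition~\ref{b} is precisely the uniform nilpotency bound on $pA$ needed to truncate. Two small points deserve tightening. First, the sentence ``each re-application of Lemma~\ref{citeEngel} strictly raises the weight'' is not quite accurate: the linear part of a re-application preserves the $pA$-weight while shortening the first-argument sum, and only the correction part strictly raises the weight. Termination is therefore by a double induction---outer on the $pA$-weight (capped at $p-2$ by Proposition~\ref{b}), inner on the number of summands remaining in the first argument---rather than by weight alone. Second, when you iterate Lemma~\ref{citeEngel} you are always working in the same ambient brace $A$, so the same uniform bound $s=p-1$ (coming from $A^{n+1}=0$ with $n+1\le p-1$) is available at every step; this is what makes all the integer coefficients produced by the recursion depend only on $p$ and not on $n$, the brace, or the elements $a,b,c$.
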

\begin{corollary}\label{777}\cite{shsm}
  Let $p$ be a prime number and let $m$ be a number. Let  $W'$ be the set of nonassociative words in variables $X, Z$ where $Z$ appears only once at the end of each word and $X$ appears at least twice in each word.  
  Then there are integers $\gamma _{w}$ such that only a finite number of them is non zero and the following holds:
 For each brace $(A, +, \circ )$  of cardinality $p^{n}$ with $p>n+1$ and for each $a, b\in A$
 we have
\[(ma)*c=m(a*c)+\sum_{w\in W}\gamma _{w}w\langle a,c\rangle ,\]  where  $w\langle a, c\rangle $ is a specialisation of the word $w$ for $X=a$, $Y=c$, and the multiplication in $w\langle a,c\rangle $ is the same as the  operation $*$ in the brace $A$ (recall that $a*c=a \circ c-a-c$).
\end{corollary}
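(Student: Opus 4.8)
The plan is to prove the identity by induction on the integer $m$, peeling off one copy of $a$ at each step and controlling the resulting correction terms by their $a$-degree. It suffices to treat non-negative $m$: since $p^{n}A=0$ we have $ma=m'a$ whenever $m'\equiv m \pmod{p^{n}}$, so any integer $m$ (or $m$ taken modulo the additive exponent) may be replaced by a non-negative representative. The base cases $m=0$ and $m=1$ are immediate, since $0*c=0$ and $(1\cdot a)*c=a*c$, with all coefficients $\gamma_w$ equal to zero.

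For the inductive step I would write $ma=(m-1)a+a$ and apply the expansion of $(x+y)*c$ from Lemma \ref{citeEngel} with $x=(m-1)a$ and $y=a$ (taking $s=n+1$, so that $A^{s}=0$). This gives
\[(ma)*c=((m-1)a)*c+a*c+\sum_{i=0}^{2s}(-1)^{i+1}\bigl((d_i*d_i')*c-d_i*(d_i'*c)\bigr),\]
where the $d_i,d_i'$ are built from $(m-1)a$ and $a$ by $+$ and $*$. By the inductive hypothesis $((m-1)a)*c=(m-1)(a*c)+\sum_w\gamma_w'\,w\langle a,c\rangle$ with every word of $a$-degree at least two, and combining this with the explicit term $a*c$ yields the required leading term $m(a*c)$. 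The remaining task is to show that every summand of the Engel correction is an integer combination of words in $a$ and $c$ in which $a$ occurs at least twice and $c$ occurs once, at the end.

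The main obstacle is exactly this reduction, because the correction terms are a priori non-associative words whose letters are the multiples $(m-1)a$ and $a$ rather than genuine words in the single letter $a$; one cannot simply extract the scalar $m-1$ from a left slot, since $*$ is additive only in its second argument, $a*(b+c)=a*b+a*c$. To handle it I would establish an auxiliary reduction by downward induction on word length: any non-associative $*$-product of length at least two whose letters are integer multiples of $a$ together with at most one final $c$ can be rewritten as an integer combination of words purely in $a,c$ of $a$-degree at least two. Products of length exceeding $n$ vanish by left nilpotency ($A^{n+1}=0$ by Rump \cite{rump}), which anchors the downward induction and forces termination; for shorter products $W_1*W_2$ one first reduces $W_1$ and $W_2$ by the inductive hypothesis, then clears scalars out of the second slot by additivity and out of the first slot by a further application of Lemma \ref{citeEngel}, each such application only lengthening the products and hence falling under the already-established longer cases.

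Finally, since every identity invoked (the Engel expansion, additivity of $*$, and the vanishing $A^{n+1}=0$) holds in every brace of cardinality $p^{n}$ with $p>n+1$ and carries only universal integer coefficients, the resulting $\gamma_w$ depend solely on the additive group and not on the chosen brace, and only finitely many are nonzero because every admissible word has length at least three and all words of length exceeding $n$ vanish. This completes the scheme; I expect the bookkeeping in the auxiliary reduction, rather than any single conceptual point, to be the delicate part.
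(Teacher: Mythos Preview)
Your approach is correct and follows essentially the same route as the paper's own proof (the argument sketched in \cite{shsm}): induction on $m$, splitting $ma=(m-1)a+a$ and expanding $(x+y)*c$, then absorbing the resulting higher-degree correction terms. The only real difference is in packaging. The paper invokes Lemma~\ref{co}, which already expresses $(a+b)*c$ as $a*c+b*c$ plus a universal integer combination of genuine $*$-words in $a,b,c$ with $c$ at the end; applying this to $a$ and $ma$ and then using the inductive hypothesis on every occurrence of $ma$ inside the resulting words finishes the job in one line. You instead call the rawer Lemma~\ref{citeEngel}, whose corrections involve the auxiliary elements $d_i,d_i'$, and then carry out by hand the ``auxiliary reduction'' that turns these into pure words---which is precisely the content of Lemma~\ref{co}. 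So your downward induction on word length is essentially an inline reproof of Lemma~\ref{co}; note that the $d_i,d_i'$ are already mixtures of sums and $*$-products of $(m-1)a$ and $a$, not merely integer multiples of $a$ as your phrasing suggests, but your recursive scheme (additivity on the right, another Engel expansion on the left, termination by $A^{n+1}=0$) handles this correctly.
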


\begin{theorem}\label{1}\cite{shsm}
 Let $(A, +, \circ )$ be a brace of cardinality $p^{n}$, where $p$ is a prime number such that  $p>n+1$. Let $ann (p^{2})$ be defined as before, so $ann(p^{2})=\{a\in A: p^{2}a=0\}$. Let $\odot $ be defined as \cite{shsm}, so
$[x]\odot [y]=[\wp^{-1}((px)*y)]$. Let $\xi =\gamma  ^{p^{p-1}}$ where $\gamma $ is a primitive root modulo $p^{p}$.
 Define a binary  operation $\bullet $ on $A/ann(p^{2})$ as follows:
\[[x]\bullet [y]=\sum_{i=0}^{p-2} \xi ^{p-1-i} [\xi ^{i}x]\odot [y],\]
for $x,y\in A$. Then $A/ann(p^{2})$ with the binary operations $+$ and $\bullet $ is a pre-Lie ring.
\end{theorem}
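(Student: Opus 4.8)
The goal is to establish the two defining axioms of a left pre-Lie ring for $\bullet$ on the abelian group $(A/ann(p^{2}),+)$: that $\bullet$ is biadditive, and that its associator $(x\bullet y)\bullet z-x\bullet(y\bullet z)$ is symmetric in $x$ and $y$. Well-definedness on cosets is already supplied by Lemma \ref{6}, since every summand $\xi^{p-1-i}[\xi^{i}x]\odot[y]$ is a well-defined function of $[x]$ and $[y]$. The organising principle I would use is that the weighted sum $\sum_{i=0}^{p-2}\xi^{p-1-i}(\cdot)$ is an eigenspace projector: it retains exactly the part of $[\xi^{i}x]\odot[y]$ that is linear in the first variable and discards everything of higher degree.

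First I would record the homogeneous expansion
\[(\xi^{i}\cdot px)*y=\sum_{j\geq 1}(\xi^{i})^{j}S_{j},\qquad S_{j}\in p^{j}A,\]
with the $S_{j}$ independent of $i$, obtained by iterating Corollary \ref{777} and Lemma \ref{co} and collecting the resulting $*$-monomials according to their degree $j$ in $px\in pA$; each of the $j$ occurrences of $px$ carries one factor of $p$, so $S_{j}\in p^{j}A$ by Proposition \ref{b}. Since $\wp^{-1}$ is additive on $pA$ modulo $ann(p^{2})$ and commutes with integer multiples, this gives
\[[x]\bullet[y]=\sum_{j\geq 1}\Big(\xi^{p-1}\sum_{i=0}^{p-2}\xi^{i(j-1)}\Big)[\wp^{-1}(S_{j})].\]
By Lemma \ref{Engelxi} the inner sum is $\equiv 0\pmod{p^{p}}$ whenever $j\not\equiv 1\pmod{p-1}$; as $\wp^{-1}(S_{j})\in p^{j-1}A$ and $p^{p-1}A=0$, all such terms vanish, and the terms with $j\equiv 1\pmod{p-1}$, $j\geq 2$ vanish because then $j\geq p$ and $S_{j}\in p^{p}A=0$. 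Only $j=1$ survives, so $[x]\bullet[y]=(p-1)[\wp^{-1}(S_{1})]$ depends only on the degree-one part of $(px)*y$. Biadditivity in the second variable is immediate from $u*(y+y')=u*y+u*y'$ and additivity of $\wp^{-1}$; in the first variable one expands $(\xi^{i}px+\xi^{i}px')*y$ by Lemma \ref{co}, whose correction terms all have total degree $\geq 2$ in $\xi^{i}px,\xi^{i}px'$ and are therefore killed by the same projection, leaving $[x]\bullet[y]+[x']\bullet[y]$.

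For the left-symmetry of the associator I would work with the now-biadditive reduced form and the brace relation coming from the multiplicativity $\lambda_{a\circ c}=\lambda_{a}\lambda_{c}$, namely
\[a*(c*b)=(a\circ c)*b-a*b-c*b.\]
Taking $(a,c,b)=(x,y,z)$ and subtracting the version with $x,y$ interchanged gives $x*(y*z)-y*(x*z)=(x\circ y)*z-(y\circ x)*z$. Hence the antisymmetric part $(x,y,z)-(y,x,z)$ of the associator reduces to $\big[(x*y)*z-(y*x)*z\big]-\big[(x\circ y)*z-(y\circ x)*z\big]$. Because $x\circ y=x+y+x*y$, these two commutator-type expressions agree once the non-additive remainder of $*$ in its first slot is removed, which is exactly what the linear projection defining $\bullet$ does; so after projecting and applying $\wp^{-1}$ the antisymmetric part is zero in $A/ann(p^{2})$, which is the required identity.

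The main obstacle is the $p$-adic bookkeeping in this last step: one must verify that every discrepancy between $(x\circ y)*z-(y\circ x)*z$ and $(x*y)*z-(y*x)*z$ lies deep enough in the $p$-adic filtration that, after $\wp^{-1}$ lowers the order by one, it falls into $ann(p^{2})$ and dies in the quotient. This requires combining the full strength of Proposition \ref{b} (a product of two elements of $pA$ lies in $p^{2}A$, and a product of $p-1$ of them vanishes), the additivity of $\wp^{-1}$ modulo $ann(p^{2})$, and the vanishing $p^{p-1}A=0$, while carrying the $\xi$-projection through consistently so that only the linear-in-each-variable pieces, governed symmetrically by the single relation above, remain.
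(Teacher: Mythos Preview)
This theorem is not proved in the present paper at all: it is quoted verbatim from \cite{shsm} in the section ``Some results from other papers which were used in previous sections'', with no proof supplied here. So there is no ``paper's own proof'' to compare against; the comparison you are asking for must be with the argument in \cite{shsm}.

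That said, your strategy is the right one and matches the mechanism behind the result. The observation that $\sum_{i=0}^{p-2}\xi^{p-1-i}(\cdot)$ acts as a projector onto the part of $[\xi^{i}x]\odot[y]$ that is homogeneous of degree one in $x$ is exactly how the construction in \cite{shsm} is designed to work (and is the same computation used in the proof of Lemma~\ref{similar} in this paper). Your biadditivity argument is essentially complete: the expansion into $S_{j}\in p^{j}A$ via Corollary~\ref{777} and Proposition~\ref{b}, together with the character-sum orthogonality from Lemma~\ref{Engelxi} and $p^{p-1}A=0$, does kill all $j\neq 1$ and yields $[x]\bullet[y]=(p-1)[\wp^{-1}(S_{1})]$, from which linearity in each variable follows.

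Where your sketch genuinely falls short is the pre-Lie identity. You invoke the brace relation $a*(c*b)=(a\circ c)*b-a*b-c*b$ at the level of $*$, but $\bullet$ is a double application of the construction: $(x\bullet y)\bullet z$ involves $\wp^{-1}\big((p\,h)*z\big)$ with $h$ a representative of $\wp^{-1}((px)*y)$, and one must control the interaction of two $\wp^{-1}$'s, two $\xi$-averages (one for each layer), and the error terms from Lemma~\ref{co} simultaneously. Your final paragraph correctly identifies this as the crux, but the sentence ``these two commutator-type expressions agree once the non-additive remainder of $*$ in its first slot is removed, which is exactly what the linear projection defining $\bullet$ does'' is where the real work is hidden and is not yet an argument: the projection is only literally applied in the outermost $\bullet$, and you have not shown that the inner nonlinearity (coming from $x\bullet y$ sitting in the first slot) is of high enough $p$-adic order to be annihilated after one $\wp^{-1}$. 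Filling this in requires precisely the kind of filtration bookkeeping carried out in \cite{shsm}; as written, the pre-Lie step is a plausible outline rather than a proof.
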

 The following result was proved in \cite{passage}:
    
\begin{lemma}\label{aga}\cite{passage}
 Let $(A, +, \circ)$ be a left nilpotent pre-Lie ring  of cardinality $p^{n}$ for some prime $p>n+1$. 
 Let $\Omega:A\rightarrow A$ be  the inverse function of $W$ (so $W(\Omega (a))=a$) where $W(a)=e^{L_{a}}(1)-1$, where $1\in A^{identity}$.
  Then there are $\alpha _{w}\in \mathbb Z$ not depending on $a$ such that 
$\Omega (a)=a+\sum_{w }\alpha _{w}w(a)$ where $w$ are finite  non associative words in variable $x$ (of degree at least $2$), and $w(a)$ is a specialisation of $w$ at $a$ (so for example if $w=x\cdot (x\cdot x)$ then $w(a)=a\cdot (a\cdot a)$).
\end{lemma}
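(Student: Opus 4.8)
The plan is to treat $W$ as a degree-filtered endomorphism of $A$ that is tangent to the identity, invert it term by term along the left-nilpotency filtration, and then argue that the coefficients produced by the inversion can be taken integral because $A$ is a finite $p$-group of bounded exponent with $p$ large relative to the nilpotency index. First I would record the explicit expansion coming from the definition: since $L_a(1)=a$ for the adjoined identity $1$, we have
\[ W(a)=e^{L_a}(1)-1=\sum_{k\ge 1}\tfrac{1}{k!}\,L_a^{\,k-1}(a)=a+\tfrac{1}{2}\,a\cdot a+\tfrac{1}{6}\,a\cdot(a\cdot a)+\cdots, \]
so $W(a)=a+\sum_{w}\gamma_w\,w(a)$, where $w$ ranges over the left-normed non-associative words $x\cdot x,\ x\cdot(x\cdot x),\dots$ and $\gamma_w=1/k!$ for the word of degree $k$. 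Because $A$ has cardinality $p^{n}$ it is left nilpotent, so there is $N\le n$ with $A^{N+1}=0$; hence every word of degree exceeding $n$ vanishes on $A$ and the sum is finite, exactly as in the truncations used in Lemma \ref{2} and Proposition \ref{b}.

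Next I would build $\Omega$ by solving $W(\Omega(a))=a$ recursively on the filtration $A\supseteq A^{2}\supseteq\cdots\supseteq A^{n+1}=0$. Writing $\Omega(a)=a+\sum_{\deg w\ge 2}\alpha_w\,w(a)$ with undetermined coefficients and substituting into $W(\Omega(a))=a$, the degree-$k$ part of the equation expresses the degree-$k$ component of $\Omega(a)$ in terms of the already-computed lower-degree components; since the linear part of $W$ is the identity, the sought component enters with coefficient $1$ at every stage, so the recursion is solvable. This is the standard formal compositional inversion of a map tangent to the identity, carried out in the free pre-Lie structure, and it yields \emph{universal} coefficients $\alpha_w$ that depend only on the combinatorics of composing words, not on $a$ nor on the particular ring $A$ (I do not need uniqueness of the expansion, only existence of one such integral form). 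The only arithmetic entering the recursion is division by the factorials $k!$ inherited from the $\gamma_w$, because all composition constants of non-associative words are themselves integers.

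The crucial point, and the step I expect to be the main obstacle, is passing from these rational coefficients to integer ones. Every denominator produced is a product of factorials $k!$ with $2\le k\le n$; since $n<p-1$, each such $k!$ is coprime to $p$ and therefore acts invertibly on the additive group $(A,+)$. Moreover, as $(A,+)$ is abelian of order $p^{n}$ its exponent divides $p^{n}$, so $p^{n}A=0$ and multiplication by any integer $s$ coprime to $p$ coincides on $A$ with multiplication by the integer $s^{-1}\bmod p^{n}$. Consequently each rational coefficient, having $p$-coprime denominator, may be replaced by an integer acting identically on every element of $A$, which lets me take all $\alpha_w\in\mathbb{Z}$. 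I would finish by noting the finiteness ($\alpha_w=0$ once $\deg w>n$) and observing that this integral, $a$-independent expansion of $\Omega$ is precisely what is needed when $\Omega$ is substituted into the flow formula used in Lemma \ref{similar}.
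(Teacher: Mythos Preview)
The paper does not supply its own proof of this lemma: it is merely quoted in the final section ``Some results from other papers which were used in previous sections'' with a citation to \cite{passage}, and no argument is given. So there is nothing in the present paper to compare your proposal against.

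That said, your argument is the natural one and is essentially correct. The decomposition $W(a)=a+\sum_{k\ge 2}\tfrac{1}{k!}L_a^{k-1}(a)$, the degree-by-degree inversion along the left-nilpotency filtration $A\supseteq A^{2}\supseteq\cdots$, and the observation that all occurring denominators divide $(p-2)!$ and are therefore invertible modulo the exponent of $(A,+)$, together give exactly what is asserted. Two small comments: first, the universal rational $\alpha_w$ obtained by formal inversion need not literally vanish in degrees $>n$, but since $w(a)\in A^{\deg w}=0$ there you may simply \emph{declare} those coefficients to be $0$, which is what the finiteness clause requires; second, the integer replacements you produce depend on $p^{n}$ (through the choice of inverse modulo $p^{n}$), but the lemma only demands independence from $a$, so this is harmless.
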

    
{\bf Acknowledgements.} The author acknowledges support from the EPSRC programme grant EP/R034826/1 and from the EPSRC research grant EP/V008129/1.

\end{document}